\newtheorem{theorem}{Theorem}
\newtheorem{remark}{Remark}
\newtheorem{corollary}{Corollary}
\newtheorem{definition}{Definition}
\newtheorem{proposition}{Proposition}
\newtheorem{example}{Example}
\pgfplotsset{compat=1.17}
\newcommand{\p}{\mathbb{P}}
\newcommand{\E}{\mathbb{E}}
\newcommand{\eqd}{\overset{\mathrm{d}}{=}}
\newcommand{\id}[1]{\mathbbm{1}_{\{#1\}}}
\newcommand{\bm}[1]{\boldsymbol{#1}}
\newcommand{\esssup}{\operatorname*{ess\,sup}}
\newcommand{\essinf}{\operatorname*{ess\,inf}}
\newcommand{\Unif}{\operatorname{Unif}}
\renewcommand{\d}{\, \mathrm{d}}
\tikzstyle{arrow} = [thick,->,>=stealth]
\title{Stochastic representation of Sarmanov copulas}
\author{Christopher Blier-Wong}
\affil{Department of Statistical Sciences, University of Toronto, Canada, \href{mailto:christopher.blierwong@utoronto.ca}{christopher.blierwong@utoronto.ca}}
\date{\today}
\begin{document}

\maketitle

\begin{abstract}
Sarmanov copulas offer a simple and tractable way to build multivariate distributions by perturbing the independence copula.
They admit closed-form expressions for densities and many functionals of interest, making them attractive for practical applications. 
However, the complex conditions on the dependence parameters 
to ensure that Sarmanov copulas are valid limit their application in high dimensions. 
Verifying the $d$-increasing property typically requires satisfying a combinatorial set of inequalities 
that makes direct construction difficult. 
To circumvent this issue, we develop a stochastic representation for bivariate Sarmanov copulas.
We prove that every admissible Sarmanov can be realized as 
a mixture of independent univariate distributions indexed by a latent Bernoulli pair. 
The stochastic representation replaces the problem of verifying copula validity into the problem of ensuring nonnegativity of a Bernoulli probability mass function. 
The representation also recovers classical copula families, 
including Farlie--Gumbel--Morgenstern, Huang--Kotz, and Bairamov--Kotz--Bekçi as special cases. 
We further derive sharp global bounds for Spearman's rho and Kendall's tau.
We then introduce a Bernoulli-mixing construction in higher dimensions, 
leading to a new class of multivariate Sarmanov copulas with 
easily verifiable parameter constraints and scalable simulation algorithms.
Finally, we show that powered versions of bivariate Sarmanov copulas admit 
a similar stochastic representation through block-maximal order statistics.
\end{abstract}

\noindent\textbf{Keywords}: Sarmanov copulas, Farlie--Gumbel--Morgenstern copulas, Huang--Kotz copulas, High-dimensional copulas, Asymmetric copulas, Measures of multivariate association, Stochastic representation

\section{Introduction}

When building multivariate statistical models, it is often more convenient to 
treat dependence separately from the marginal behaviour.
In many cases, parametric or semiparametric models capture marginal behaviour well, 
particularly if the statistician has substantial domain knowledge \cite{mcneil2015quantitative}.
By contrast, dependence is rarely captured by linear correlation, 
can vary substantially across the support, and is sensitive to monotone marginal transformations.
Copulas provide a standard framework for representing this dependence structure seperately from the margins. 
They enable the combination of arbitrary marginals into a multivariate distribution 
and may yield families with interpretable association parameters.
We refer the reader to \cite{nelsen2006introduction, joe2015dependence, durante2015principles} 
for comprehensive introductions to copula theory and applications, 
and \cite{grosser2021copulae, genest2024copula} for recent reviews. 

Copulas also enable a modular approach to dependence modelling, allowing marginal distributions to be specified or 
estimated separately using the most appropriate tools for each margin. 
Once the margins are fixed, the statistician can focus on selecting a copula family that captures 
relevant dependence properties such as tail behaviour, asymmetry, or exchangeability. 
This flexibility has made copulas a standard tool in insurance and finance, where rigorous risk management and 
scenario analysis require careful multivariate modelling \cite{cherubini2004copula, denuit2006actuarial, mcneil2015quantitative}.

A $d$-copula is a distribution function on $[0,1]^d$ with uniform univariate margins. 
Formally, a function $C:[0,1]^d\to[0,1]$ is a copula if it satisfies three conditions: 
\begin{enumerate}[label=(\roman*)]
   \item groundedness, meaning $C(\bm u)=0$ whenever at least one component of $\bm u$ equals $0$; 
   \item uniform margins, so that $C(1,\dots,1,u_j,1,\dots,1)=u_j$ for each $j$; and 
   \item $d$-increasing, meaning that all rectangular increments of $C$ defined by coordinatewise intervals are nonnegative.
\end{enumerate}
An explicit formulation of the latter condition is that for all vectors $\bm a\le \bm b$ in $[0,1]^d$,
\begin{equation}\label{eq:d-increasing}
   V_C([\bm a,\bm b])=\sum_{i_1=0}^1\dots\sum_{i_d=0}^1 (-1)^{i_1+\dots+i_d} C(u_{1i_1},\dots,u_{di_d}) \ge 0,
\end{equation}
where $u_{j0}=a_j$ and $u_{j1}=b_j$.

Copulas allows one to separate the dependence from the marginal distributions through Sklar's theorem \cite{sklar1959fonctions}. 
If $H$ is a $d$-variate distribution function with univariate margins $F_1,\dots,F_d$, then there exists a copula $C$ such that
$$H(x_1,\dots,x_d)=C\left(F_1(x_1),\dots,F_d(x_d)\right), \qquad (x_1,\dots,x_d)\in\mathbb R^d.$$
If the margins are continuous, this copula is unique. 
Conversely, for any copula $C$ and any collection of univariate margins $F_1,\dots,F_d$,
the right-hand side defines a valid multivariate distribution with those margins. 

The demand for high-dimensional dependence models is driven by the scale of modern data 
and the fact that dependence can have a substantial impact on risk assessment and decision-making. 
In finance and insurance, portfolios often involve hundreds or thousands of policies, 
creating high-dimensional loss distributions where tail events arise from the interaction of many risk factors \cite{mcneil2015quantitative}. 
Similarly, in environmental sciences such as climate, hydrology, and air quality, 
multivariate dependence across space and time affects the risk of joint extremes and compound events \cite{ribeiro2020risk, naseri2022bayesian}.

Modelling high-dimensional dependence structures usually involves a trade-off between flexibility and tractability. 
Vine copulas, for instance, offer great flexibility but are difficult to scale. 
Conversely, simpler parsimonious families have simpler parameter constraints but often too restrictive, 
e.g., Archimedean copulas are limited to exchangeable dependence structures. 
There is therefore a need for alternative models that balance flexibility, interpretability, and scalability.
Sarmanov copulas are an attractive option because they apply perturbations to the independence copula and admit closed-form expressions for many quantities of interest,
but their admissibility conditions become increasingly difficult to satisfy as dimensions grow, and we are not aware of 
any existing multivariate Sarmanov families other than the simplest forms. 
To overcome this, we introduce a stochastic representation for bivariate Sarmanov copulas. 
Building on this, we then propose a new class of multivariate Sarmanov copulas by extending the bivariate stochastic representation to higher dimensions, and we develop their properties.
Parameter constraints under the stochastic representation are easy to verify, and simulation is straightforward and scalable.

Sarmanov copulas are widely used in risk management because they provide flexible dependence structures 
without sacrificing analytical tractability, particularly when combined with phase-type or mixed Erlang distributions. 
See, e.g., \cite{hashorva2015sarmanov}, where the authors derive aggregation formulas and capital allocation rules 
when risks follow Sarmanov distributions. 
They show that the resulting aggregate random variable remains within the class of mixed Erlang distributions. 
In \cite{ratovomirija2017multivariate}, the authors used Sarmanov-mixed Erlang risks 
and studied the properties of the aggregate risk and capital allocation.
They also studied how the choice of kernel affects the range of attainable dependence. 
More recently, \cite{vernic2022sarmanov} modelled the dependence between claim frequency and severity.
They showed that ignoring the dependence significantly biases insurance premiums. 
To date, however, most research has focused on bivariate cases where admissibility constraints are easier to handle.

Some of the earliest and most widely used copula families are Farlie--Gumbel--Morgenstern (FGM) copulas, 
proposed in \cite{eyraud1936principes,gumbel1960bivariate,morgenstern1956einfache}. 
Subsequently, \cite{farlie1960performance} introduced a general class of bivariate distributions 
that generalize the FGM construction, whose shape is given by $C(u_1,u_2)=u_1u_2(1+\theta A(u_1)B(u_2))$ 
for kernel functions $A$ and $B$ satisfying certain regularity conditions, 
and a scalar parameter $\theta$ chosen to ensure that $C$ is a copula. 
In \cite{sarmanov1966generalized}, the author introduced a slightly more general formulation of the Farlie copula, 
whose densities are of the form
$$c(u_1, u_2) = 1 + a \phi_1(u_1) \phi_2(u_2), \qquad (u_1,u_2)\in[0,1]^2,$$
where $\int_0^1 \phi_j(u)\d u=0$ for $j=1,2$, and $a$ is a scalar parameter chosen to ensure nonnegativity of $c$, 
leading to the same class of copulas as Farlie copulas under common regularity conditions. 
We refer to this family as Sarmanov copulas; which are of the form
\begin{equation}\label{eq:farlie-2d}
   C(u_1,u_2)=u_1u_2 + a g_1(u_1)g_2(u_2), \qquad (u_1,u_2)\in[0,1]^2,
\end{equation}
for kernel functions $g_1$ and $g_2$ such that $g_1(0)=g_1(1)=0$ and $g_2(0)=g_2(1)=0$, 
and a scalar parameter $a$ chosen to satisfy regularity conditions ensuring that $C$ is a copula. 
Such models are attractive because they admit tractable expressions for densities and dependence measures, 
and they may be interpreted as perturbations of independence.
In this class of copulas, the kernels $g_1$ and $g_2$ control the shape of the dependence, 
while the parameter $a$ controls its strength.

However, ensuring that \eqref{eq:farlie-2d} is a valid copula requires 
verifying the $2$-increasing property in \eqref{eq:d-increasing}. 
In practice, finding the admissible range of $a$ for given kernels $g_1$ and $g_2$ is challenging, 
and most research proposing new Sarmanov families devote considerable effort to this task. 
Sarmanov extensions are tractable once admissibility is established, 
but verifying admissibility directly is typically the most challenging step.

The classical Farlie--Gumbel--Morgenstern copula corresponds to the choice $g_1(u)=u(1-u)$ and $g_2(u_2)=u_2(1-u_2)$, leading to 
$$C(u_1,u_2)=u_1u_2\left(1+\theta(1-u_1)(1-u_2)\right), \qquad (u_1,u_2)\in[0,1]^2,$$
with admissible parameter $\theta\in[-1,1]$. 
This family is analytically convenient and admits simple expressions for rank correlations. 
For instance, in the bivariate case, both Kendall's $\tau$ and Spearman's $\rho$ are linear functions of $\theta$, 
although the maximum attainable dependence is limited to $|\rho|\le 1/3$ and $|\tau|\le 2/9$. 
Introductory texts often use the FGM family to illustrate copula theory, 
as its simple structure and closed-form properties make it easy to analyze, 
see, for instance, \cite{genest2007everything}. 

In dimension $d$, the multivariate FGM copula takes the polynomial form
$$C(\bm u)=\left(\prod_{m=1}^d u_m\right)\left(1+\sum_{k=2}^d \sum_{1\le j_1<\cdots<j_k\le d}\theta_{j_1\cdots j_k} \bar u_{j_1}\cdots \bar u_{j_k}\right),\qquad \bm u \in [0,1]^d,$$
where $\bar u_j=1-u_j$ and the parameter vector $\bm\theta$ has $2^d-d-1$ components. 
The admissible set of parameters is nontrivial: these parameters must satisfy the $2^d$ linear inequalities 
\begin{equation}\label{eq:fgm-admissibility}
   \sum_{k=2}^d \sum_{1\le j_1<\cdots<j_k\le d} \varepsilon_{j_1}\cdots \varepsilon_{j_k} \theta_{j_1\cdots j_k} \ge -1.
\end{equation}
for all sign vectors $\bm\varepsilon\in\{-1,1\}^d$.

In \cite{huang1999modifications}, the authors propose polynomial extensions of the FGM family 
designed to increase the achievable level of positive dependence while retaining closed-form tractability. 
They consider the family \eqref{eq:HKI} in Section 2 and the family \eqref{eq:HKII} in Section 3:
\begin{equation}\label{eq:HKI}
   C(u_1,u_2)=u_1u_2\left(1+a(1-u_1^p)(1-u_2^p)\right), \quad p> 1;
\end{equation}
\begin{equation}\label{eq:HKII}
   C(u_1,u_2)=u_1u_2\left(1+a(1-u_1)^q(1-u_2)^q\right), \quad q> 0.
\end{equation}
We will refer to these as the Huang--Kotz I (HKI) and Huang--Kotz II (HKII) families, respectively. 
These extensions are both Sarmanov copulas with symmetric kernels 
$g_1(u)=g_2(u)=u(1-u^p)$ and $g_1(u)=g_2(u)=u(1-u)^q$, respectively. 
Their analysis shows that within such polynomial extensions, 
the maximal attainable positive correlation can be slightly increased beyond the classical FGM bounds, 
but the negative dependence remains limited to $-1/3$ and $-2/9$ for Spearman's $\rho$ and Kendall's $\tau$, respectively. 
The admissible parameter regions for $a$ in \eqref{eq:HKI} and \eqref{eq:HKII} are derived by direct verification of 2-increasing. 
Most of \cite{huang1999modifications} is devoted to this task.
\footnote{The title of their paper is ``Modifications of the Farlie--Gumbel--Morgenstern distributions. A tough hill to climb,'' 
reflecting the authors' resolve in deriving these admissibility constraints.}

Verifying admissibility becomes increasingly difficult in higher dimensions. 
A $d$-copula must be $d$-increasing, a condition that is already combinatorial in its most explicit form. 
For multivariate FGM copulas, the parameter dimension grows as $2^d-d-1$, and 
the admissible region is defined by the $2^d$ linear constraints in \eqref{eq:fgm-admissibility}. 
For general Sarmanov families, particularly those with asymmetric kernels or multiple parameters, 
the admissibility problem is particularly challenging, and we have not found any
literature on constructing or analyzing multivariate Sarmanov copulas beyond simple extensions of the FGM copula.

The authors of \cite{blier-wong2022stochastic} establish a one-to-one correspondence between multivariate FGM copulas 
and multivariate Bernoulli distributions with symmetric margins. 
The dependence parameters coincide with mixed central moments of a latent Bernoulli vector. 
Specifically, if $\bm I=(I_1,\dots,I_d)$ satisfies $\p(I_j=1)=1/2$, then
$$C(\bm u)=\E\left[\prod_{m = 1}^d u_m \left(1 + (-1)^{I_m} \bar u_m\right)\right],$$
is an FGM copula with parameters given by
$$\theta_{j_1\cdots j_k} = (-2)^k \E\left[\prod_{n=1}^k\left(I_{j_n} - \frac12\right)\right].$$
Further, let $\bm{U}_{[j]}=(U_{1,[j]},\dots,U_{d,[j]})$ have i.i.d. components with $U_{k,[j]}\sim \mathrm{Beta}(2-j,1+j)$ 
for $j\in\{0, 1\}$, and define $\bm U \eqd (1-\bm I)\bm{U}_{[0]} + \bm I \bm{U}_{[1]}$. 
Then, $\bm U$ has uniform margins and FGM copula with parameters as above. 
The FGM copula is therefore the copula of a latent Bernoulli mixture of independent Beta components. 
This representation characterizes admissibility of $\bm\theta$ as being equivalent to nonnegativity of a multivariate Bernoulli probability mass function on $\{0,1\}^d$,
$$f_{\bm I}(\bm i) = 2^{-d}\left(1+\sum_{k=2}^d \sum_{j_1<\cdots<j_k}\theta_{j_1\cdots j_k}(-1)^{i_{j_1}+\cdots+i_{j_k}}\right)\ge 0,\qquad \bm i\in\{0,1\}^d,$$ 
and allows for a natural simulation procedure based on sampling from the Bernoulli vector $\bm I$ and the Beta marginals.

This representation transforms the verification of $d$-increasing into a finite set of nonnegativity constraints for a Bernoulli pmf. 
Any valid symmetric Bernoulli distribution for $\bm I$ immediately leads to admissible parameters $\bm\theta$
The multivariate Bernoulli pmf constraints are well understood \cite{sharakhmetov2002characterization, fontana2018representation} 
and various structured subclasses (exchangeable, Markov, factor models) allow scalable inference and simulation. 
This Bernoulli-driven viewpoint also facilitates stochastic ordering analysis. Subsequent work has exploited this relationship in exchangeable settings \cite{blier-wong2024exchangeable}.

In \cite{blier-wong2024new}, the authors propose to generalize the Bernoulli-mixture representation of FGM copulas. 
Starting with the existing stochastic representation of FGM copulas, they modify the marginal distributions from Beta 
to other families while retaining the Bernoulli-mixture structure, and the Huang--Kotz I family. 
In particular, let $\bm I=(I_1,\dots,I_d)$ be a $d$-variate Bernoulli vector with $\p(I_m=1)=\pi_m\in (0,1)$ 
(the difference from the FGM case is that the margins need not be symmetric). 
Let $\bm U_0=(U_{0,1},\dots,U_{0,d})$ and $\bm U_1=(U_{1,1},\dots,U_{1,d})$ be vectors of independent $\mathrm{Unif}(0,1)$ random variables, independent of $\bm I$. Define
$$ \bm U  \eqd \bm{U}_0^{\bm 1-\bm \pi} \bm U_1^{\bm I} =\left(U_{0,1}^{1-\pi_1}U_{1,1}^{I_1},\dots, U_{0,d}^{1-\pi_d}U_{1,d}^{I_d}\right),$$
with all powers and products taken componentwise. 
This construction leads to standard uniform margins for $\bm U$. 
For $d=2$, the cumulative distribution function (cdf) of $\bm U$ is given by 
$$C(u_1,u_2) = u_1u_2\left(1+\theta_{12}\left(1-u_1^{\frac{\pi_1}{1-\pi_1}}\right)\left(1-u_2^{\frac{\pi_2}{1-\pi_2}}\right)\right), \qquad (u_1,u_2)\in[0,1]^2,$$  
with
$$\theta_{12}=\E\left[\frac{I_1-\pi_1}{\pi_1}\frac{I_2-\pi_2}{\pi_2}\right].$$
In the symmetric case $\pi_1=\pi_2$ coincides with the Huang--Kotz I family of copulas in \eqref{eq:HKI}. 
See also \cite{cossette2025generalized} for a higher-dimensional analysis and geometric interpretation of this construction. 

Since FGM and Huang--Kotz I families of copulas both admit such a Bernoulli-mixture representation, 
a natural question is whether this construction can be extended to more general Sarmanov copulas, 
including asymmetric kernels and other Huang--Kotz variants. 
The main result of this paper is to show that this is indeed the case.
We develop a unifying stochastic representation for Sarmanov copulas 
of the form $C(u_1,u_2)=u_1u_2+a g_1(u_1)g_2(u_2)$. 
The main theorem shows that, under mild regularity on the kernels $g_1$ and $g_2$, 
any bivariate Sarmanov copula can be realized as the cdf of a 
Bernoulli-mixture between two marginal distributions, 
so that admissibility reduces to finitely many nonnegativity constraints 
for a bivariate Bernoulli pmf rather than a large set of inequalities. 
This leads to a constructive method for building and simulating from asymmetric Sarmanov copulas and 
provides a common probabilistic mechanism that recovers classical families (including FGM and Huang--Kotz-type perturbations) as special cases. 
We then introduce a Bernoulli-mixing construction in higher dimensions, yielding a copula with a generalized FGM-type subset expansion 
that enables scalable simulation and interpretable dependence modelling, and we develop extremal dependence in key subclasses. 
Finally, we broaden the framework to powered perturbations $C(u_1,u_2)=u_1u_2(1+ah_1(u_1)h_2(u_2))^r$, for $r>0$, 
giving new tractable families and corresponding dependence bounds and representations. 

The Sarmanov family of copulas is closely related to other constructions that have appeared under different names in the literature. 
The Sarmanov copulas studied here are very close to the Farlie class of copulas: 
Farlie \cite{farlie1960performance} considered copulas of the form $C(u_1,u_2)=u_1u_2(1+ah_1(u_1)h_2(u_2))$. 
Farlie copulas are a special case of Sarmanov copulas, covering almost all Sarmanov copulas under common regularity conditions 
with $u h_1(u)$ and $u h_2(u)$ absolutely continuous and having bounded derivatives. 
The Sarmanov construction was also rediscovered by \cite{rodriguez-lallena2004new}. 
Under common regularity conditions, their definitions coincide after a change of parametrization. 
For clarity, we refer to the family throughout as ``Sarmanov'' copulas, even if the Farlie representation predates Sarmanov's work. 
In Appendix \ref{app:comparison-three-classes}, we provide a detailed comparison of the three formulations, 
and explain the precise conditions under which they are equivalent. Note that the main results of this paper apply to all three families, 
hence providing a unifying stochastic representation across these closely related constructions.

The remainder of this paper is organized as follows. 
In Section \ref{sec:farlie}, we review Sarmanov copulas and present the main theorem establishing a Bernoulli-mixture representation, 
illustrate the stochastic representation of existing copulas in the literature, 
and find the attainable association coefficients within the bivariate Sarmanov class. 
In Section \ref{sec:dvariate}, we introduce a Bernoulli-mixing construction in higher dimensions, 
creating a new class of multivariate Sarmanov copulas that has an FGM-type subset expansion. 
In Section \ref{sec:power}, we extend the construction to powered bivariate Sarmanov copulas. 
Section \ref{sec:conclusion} summarizes the contributions and discusses extensions. 
Appendix \ref{sec:proofs} contains the proofs of the main results; 
Appendix \ref{sec:examples} provides additional examples of Sarmanov copulas and their stochastic representations, 
while Appendix \ref{app:comparison-three-classes} compares Sarmanov copulas to related constructions in the literature.

\section{Sarmanov copulas and a stochastic representation}\label{sec:farlie}

In this section, we develop a stochastic representation for general bivariate Sarmanov copulas with Bernoulli distributions. We will show that Sarmanov copulas can be represented as the copula of a latent Bernoulli mixture of independent random variables with suitable marginal distributions. This construction generalizes the Bernoulli-mixture representations of FGM and Huang--Kotz I copulas discussed above.

\subsection{Sarmanov copulas}\label{ss:farlie-def}

Consider the classical Sarmanov perturbation of the product copula,
$$C(u_1, u_2) = u_1 u_2 + a g_1(u_1) g_2(u_2), \qquad (u_1, u_2) \in [0,1]^2,$$
where the kernels $g_1, g_2:[0,1]\to\mathbb R$ may be different (hence asymmetric). Even in dimension 2, determining the admissible values of $a$ that ensure that $C$ is a copula is typically achieved by verifying the 2-increasing property. Under absolute continuity, this reduces to verifying nonnegativity of a mixed derivative on $[0,1]^2$, but doing so directly can still be tedious because the admissible interval depends on the detailed range of certain kernel derivatives.

We assume $g_1$ and $g_2$ are absolutely continuous on $[0,1]$, satisfy $g_1(0)=g_1(1)=0$ and $g_2(0)=g_2(1)=0$, and have a.e. derivatives $\phi_1,\phi_2\in L^\infty([0,1])$. Define $\phi_1(u)=g_1'(u)$ and $\phi_2(v)=g_2'(v)$ a.e., and define the constants
$$\Lambda_1=\frac{1}{\esssup_{u\in[0,1]}\phi_1(u)}\in(0,\infty),\qquad \lambda_1=\frac{1}{\essinf_{u\in[0,1]}\phi_1(u)}\in(-\infty,0),$$
$$\Lambda_2=\frac{1}{\esssup_{v\in[0,1]}\phi_2(v)}\in(0,\infty),\qquad \lambda_2=\frac{1}{\essinf_{v\in[0,1]}\phi_2(v)}\in(-\infty,0).$$
We have that $\lambda_1<0<\Lambda_1$ since absolute continuity implies
$0=g_1(1)-g_1(0)=\int_0^1\phi_1(u)\d u$, so $\phi_1$ must take both signs unless it is identically zero (in which case $C$ reduces to the independence copula). The same reasoning applies to $\phi_2$. Under these assumptions, the mixed derivative of a Sarmanov copula is
$$\frac{\partial^2}{\partial u_1 \partial u_2}C(u_1,u_2)=1+a\phi_1(u_1)\phi_2(u_2),$$
so (in the absolutely continuous setting) $C$ is a copula if and only if $1+a\phi_1(u_1)\phi_2(u_2)\ge 0$ a.e. This yields an explicit admissible interval for $a$. Still, the point of the main theorem below is that we do not need to analyze these inequalities directly: the stochastic representation automatically ensures admissibility.

In what follows, we will use a bivariate Bernoulli vector $(I_1,I_2)$ with success probabilities $\p(I_1=1)=\pi_1$ and $\p(I_2=1)=\pi_2.$ Its dependence structure will be specified through a single parameter given by the centred moment
\begin{equation}\label{eq:theta-param}
   \theta=\E\left[\frac{(I_1-\pi_1)(I_2-\pi_2)}{\pi_1\pi_2}\right]=\frac{\operatorname{Cov}(I_1,I_2)}{\pi_1\pi_2}.
\end{equation}
It follows that the probability mass function of $(I_1,I_2)$ is given by
\begin{equation}\label{eq:bernoulli-pmf}
   \p(I_1 = i_1, I_2 = i_2) = 
\begin{cases} 
   \pi_1 \pi_2 (1 + \theta), & (i_1,i_2) = (1,1) \\ 
   \pi_1 (1 - \pi_2) - \pi_1 \pi_2 \theta, & (i_1,i_2) = (1,0) \\ 
   (1 - \pi_1) \pi_2 - \pi_1 \pi_2 \theta, & (i_1,i_2) = (0,1) \\ 
   (1 - \pi_1)(1 - \pi_2) + \pi_1 \pi_2 \theta, & (i_1,i_2) = (0,0) 
\end{cases}.
\end{equation}

\subsection{Main result: stochastic representation for Sarmanov copulas}

We are now ready to present the main theorem, which establishes a Bernoulli-mixture representation for general bivariate Sarmanov copulas under the regularity conditions above. The proof is given in Appendix \ref{sec:proofs}.
\begin{theorem}\label{thm:stochastic}
   Assume $g_1$ and $g_2$ satisfy the absolute-continuity and bounded-derivative conditions stated in Section \ref{ss:farlie-def}. Define scaling constants $\pi_1=\Lambda_1/(\Lambda_1 - \lambda_1)\in(0,1)$ and $\pi_2=\Lambda_2/(\Lambda_2-\lambda_2)\in(0,1).$ Define two auxiliary cdfs on $[0,1]$ for each margin:
$$F_{U,[0]}(u)=u-\Lambda_1 g_1(u),\qquad F_{U,[1]}(u)=u-\lambda_1 g_1(u),$$
$$F_{V,[0]}(v)=v-\Lambda_2 g_2(v),\qquad F_{V,[1]}(v)=v-\lambda_2 g_2(v).$$
Let $(U_{[0]},U_{[1]},V_{[0]},V_{[1]})$ be mutually independent, with $U_{[i]}\sim F_{U,[i]}$ and $V_{[i]}\sim F_{V,[i]}$ for $i\in\{0,1\}$. Let $(I_1,I_2)$ be a bivariate Bernoulli vector independent of the above with the parametrization in \eqref{eq:bernoulli-pmf}. Consider the random vector $(U_1,U_2)$ defined by
$$U_1=(1-I_1)U_{[0]}+I_1U_{[1]},\qquad U_2=(1-I_2)V_{[0]}+I_2V_{[1]}.$$
Then, 
\begin{enumerate}
   \item $U_1\sim\mathrm{Unif}(0,1)$ and $U_2\sim\mathrm{Unif}(0,1)$.
   \item The copula of $(U_1,U_2)$ satisfies \eqref{eq:farlie-2d} with $a=\Lambda_1\Lambda_2\theta$, where $\theta$ is given by \eqref{eq:theta-param}.
\end{enumerate}
\end{theorem}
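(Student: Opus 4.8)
The plan is to exploit the mixture structure directly. The construction makes $U_1$ and $U_2$ \emph{conditionally independent} given the latent pair $(I_1,I_2)$, so the joint law is a finite mixture of products of the auxiliary marginals, which I can expand and match term-by-term to \eqref{eq:farlie-2d}. Before any of this, I would confirm that the four auxiliary functions are genuine cdfs on $[0,1]$. Each has the correct endpoints, since $g_1(0)=g_1(1)=0$ gives $F_{U,[0]}(0)=F_{U,[1]}(0)=0$ and $F_{U,[0]}(1)=F_{U,[1]}(1)=1$, and similarly for $V$. Monotonicity is where the definitions of $\Lambda_1,\lambda_1$ enter: the a.e. derivative of $F_{U,[0]}$ is $1-\Lambda_1\phi_1$, which is nonnegative because $\Lambda_1\phi_1(u)\le\Lambda_1\esssup_u\phi_1(u)=1$, while the derivative of $F_{U,[1]}$ is $1-\lambda_1\phi_1$, and since $\lambda_1<0$ with $\essinf_u\phi_1(u)\le\phi_1(u)$ a.e. one gets $\lambda_1\phi_1(u)\le\lambda_1\essinf_u\phi_1(u)=1$. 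The same argument covers $F_{V,[0]},F_{V,[1]}$. I would also note that $\Lambda_1>0>\lambda_1$ forces $\pi_1=\Lambda_1/(\Lambda_1-\lambda_1)\in(0,1)$, and likewise for $\pi_2$.

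For part (1), I would write the marginal of $U_1$ as the two-component mixture $F_{U_1}=(1-\pi_1)F_{U,[0]}+\pi_1 F_{U,[1]}$, using that $\p(I_1=1)=\pi_1$ (obtained by summing \eqref{eq:bernoulli-pmf} over $i_2$) and that $(I_1,I_2)$ is independent of the $U_{[i]}$. The $g_1$-contribution then carries the coefficient $(1-\pi_1)\Lambda_1+\pi_1\lambda_1$, and the chosen value $\pi_1=\Lambda_1/(\Lambda_1-\lambda_1)$ makes this coefficient vanish, leaving $F_{U_1}(u)=u$. The identical computation gives $U_2\sim\mathrm{Unif}(0,1)$. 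This mean-zero identity is the only fact needed from part (1) that I will reuse in part (2).

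For part (2), conditioning on $(I_1,I_2)=(i_1,i_2)$ gives $U_1=U_{[i_1]}$ and $U_2=V_{[i_2]}$, which are independent, so the joint cdf is $H(u_1,u_2)=\E\!\left[F_{U,[I_1]}(u_1)F_{V,[I_2]}(u_2)\right]$. Writing $F_{U,[i]}(u_1)=u_1-c_1(i)g_1(u_1)$ with $c_1(0)=\Lambda_1$, $c_1(1)=\lambda_1$ (and analogously $c_2$ for the second margin), I would expand the product. The two cross terms carry the factors $\E[c_1(I_1)]$ and $\E[c_2(I_2)]$, both zero by the identity from part (1), so what survives is $H(u_1,u_2)=u_1u_2+g_1(u_1)g_2(u_2)\,\E[c_1(I_1)c_2(I_2)]$. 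Since $c_1(I_1)=\Lambda_1-(\Lambda_1-\lambda_1)I_1$ has mean zero, $\E[c_1(I_1)c_2(I_2)]=\Cov(c_1(I_1),c_2(I_2))=(\Lambda_1-\lambda_1)(\Lambda_2-\lambda_2)\Cov(I_1,I_2)$; substituting $\Cov(I_1,I_2)=\pi_1\pi_2\theta$ from \eqref{eq:theta-param} together with $(\Lambda_1-\lambda_1)\pi_1=\Lambda_1$ and $(\Lambda_2-\lambda_2)\pi_2=\Lambda_2$ collapses the coefficient to exactly $a=\Lambda_1\Lambda_2\theta$. Because both margins are uniform, $H$ is the copula, yielding \eqref{eq:farlie-2d}.

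The algebra is essentially bookkeeping once the conditional-independence decomposition and the mean-zero identity are in place, so I do not expect a substantive obstacle there. The one genuinely delicate step is the cdf verification in the first paragraph: the essential-supremum and essential-infimum bounds, together with absolute continuity, must be invoked carefully to guarantee monotonicity a.e., which is what legitimizes treating $F_{U,[i]}$ and $F_{V,[i]}$ as distribution functions and makes the whole mixture construction well defined.
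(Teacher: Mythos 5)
Your proposal is correct and follows essentially the same route as the paper's proof: verify the four auxiliary functions are cdfs via the essential sup/inf bounds on $\phi_1,\phi_2$, obtain uniform margins from the vanishing coefficient $(1-\pi_1)\Lambda_1+\pi_1\lambda_1=0$, and compute the joint cdf as $\E[F_{U,[I_1]}(u_1)F_{V,[I_2]}(u_2)]$ by conditional independence, with the cross terms killed by the same mean-zero identity and the coefficient collapsing to $\Lambda_1\Lambda_2\theta$ via the covariance of the affine functions of $(I_1,I_2)$. The paper performs the final step by rewriting $(1-I_k)\Lambda_k+I_k\lambda_k=(\lambda_k-\Lambda_k)(I_k-\pi_k)$ rather than invoking $\Cov$ explicitly, but this is the same computation.
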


A direct consequence of Theorem \ref{thm:stochastic} is that the admissible range of $a$ ensuring that \eqref{eq:farlie-2d} is a valid copula is determined by the existence of a bivariate Bernoulli distribution with parameter $\theta=a/(\Lambda_1\Lambda_2)$. To build a valid asymmetric Sarmanov copula $C(u_1,u_2)=u_1u_2+a g_1(u_1) g_2(u_2)$, one can (i) compute $\Lambda_1,$ $\Lambda_2,$ $\lambda_1,$ $\lambda_2$, (ii) compute $\pi_1,\pi_2$ and the auxiliary cdfs $F_{U,[0]},F_{U,[1]},F_{V,[0]},F_{V,[1]}$, and (iii) choose $a$ within the admissible range so that $\theta=a/(\Lambda_1\Lambda_2)$ yields a valid bivariate Bernoulli pmf through \eqref{eq:bernoulli-pmf}. Validating the 2-increasing property is thus replaced by checking the nonnegativity of four Bernoulli pmf entries; the feasibility of $a$ is equivalent to the existence of a bivariate Bernoulli distribution with parameter $\theta=a/(\Lambda_1\Lambda_2)$. The Bernoulli pmf constraints are well understood; see, for instance, \cite{fontana2018representation}. This approach provides a constructive method for building and simulating asymmetric Sarmanov copulas. Equivalently, we have directly that
$$a\in\left[ -\min\left(\Lambda_1\Lambda_2, \lambda_1\lambda_2\right), \min\left(\Lambda_1|\lambda_2|, \Lambda_2|\lambda_1|\right)\right].$$

Further, if $g_1=g_2=g$, then $\phi_1=\phi_2=\phi$, and one recovers the symmetric Sarmanov class $C(u_1, u_2)=u_1u_2 + a g(u_1) g(u_2)$. The admissible range reduces to
$$a\in\left[-\min\left(\lambda^2, \Lambda^2 \right),\Lambda|\lambda|\right],$$
where $\Lambda^{-1}=\sup\phi$ and $\lambda^{-1}=\inf\phi$, and the same Bernoulli mixture construction applies. 

Theorem \ref{thm:stochastic} also provides a geometric interpretation of the Sarmanov class. For fixed kernels $g_1,g_2$ (implying fixed $\pi_1,\pi_2,\Lambda_1,\Lambda_2$), admissibility reduces to choosing a distribution for the bivariate Bernoulli random vector $(I_1,I_2)$ with prescribed margins. The copula is the image of the distribution of $(I_1, I_2)$ under an affine map defined by the stochastic representation. Since the Fréchet class of bivariate Bernoulli random vectors with given margins is a line segment with exactly two extreme points, the corresponding family of Sarmanov copulas is itself a line segment in the convex set of copulas: every admissible copula can be written as a convex combination of the two extremal copulas $C^{-}$ and $C^{+}$ obtained from the extremal Bernoulli couplings, with a unique mixing weight that is an affine function of $\theta=a/(\Lambda_1\Lambda_2)$ (equivalently of $a$).

We now illustrate Theorem \ref{thm:stochastic} through several examples, recovering known stochastic representations for classical Sarmanov copulas such as the FGM, the Huang--Kotz copulas \cite{huang1999modifications}, and the Bairamov--Kotz--Bekçi copulas \cite{bairamov2001new}.

\begin{example}[FGM]
Take $g_1(u)=g_2(u)=u(1-u)$ so that $C(u_1,u_2)=u_1u_2(1+a(1-u_1)(1-u_2))$, which is the bivariate FGM copula. We have $\phi(u)=g'(u)=1-2u,$ so $\Lambda= 1$ and $\lambda=-1$. Hence $\pi=1/2$, and the auxiliary cdfs become
$$F_{U,[0]}(u)=u-u(1-u)=u^2,\qquad F_{U,[1]}(u)=u+u(1-u)=2u-u^2.$$
This is exactly the stochastic representation proposed in \cite{blier-wong2022stochastic}, and the admissible range becomes $-1\le a\le 1$, matching the classical FGM admissible-range statement. We also have $U_{[0]}\sim \mathrm{Beta}(2,1)$ and $U_{[1]}\sim \mathrm{Beta}(1,2)$.
\end{example}

\begin{example}[Huang--Kotz I]
Take $g_1(u)=g_2(u) = u(1-u^p)$, so that $C(u_1,u_2)=u_1u_2(1+a(1-u_1^p)(1-u_2^p))$. We have $\Lambda = 1$ since $\sup \phi=1$ at $u=0$, and $\lambda=-1/p$ is attained at $u=1$. Hence $\pi =\Lambda/(\Lambda - \lambda)=p/(p+1)$, and the auxiliary cdfs are
$$F_{[0]}(u)=u-u(1-u^p)=u^{p+1},\qquad F_{[1]}(u)=u+u(1-u^p)/p=u\left(1+\frac{1-u^p}{p}\right).$$
The admissible range is $a \in [-\min(1/p^2,1), 1/p]$. This is exactly the stochastic representation found in \cite{blier-wong2024new}. Alternatively, let $V,W \sim \Unif(0,1)$ be independent, then $U_{[0]}\eqd V^{\frac{1}{p+1}}$ so $U_{[0]}\sim\mathrm{Beta}(p+1,1)$, and $U_{[1]}\eqd W V^{\frac{1}{p+1}}.$
\end{example}

\begin{example}[Huang--Kotz II]
Take $g_1(u)=g_2(u) = u(1-u)^q$, so that $C(u_1,u_2)=u_1u_2(1+a(1-u_1)^q(1-u_2)^q)$. We have $\phi_1(u)=\phi_2(u)=(1-u)^{q-1}(1-(q+1)u)$. Assume $q>1$ so that $\phi$ is bounded. We have $\sup\phi=1$ at $u=0$, hence $\Lambda=1$, and
$$\lambda=-\left(\frac{q+1}{q-1}\right)^{q-1}.$$
Therefore $\pi =\Lambda/\left(\Lambda-\lambda\right)=1/\left(1-\lambda\right)$. The auxiliary cdfs are
$$F_{[0]}(u)=u-\Lambda g(u)=u-u\left(1-u\right)^q=u\left(1-\left(1-u\right)^q\right),$$
$$F_{[1]}(u)=u-\lambda g(u)=u-\lambda u\left(1-u\right)^q=u\left(1-\lambda\left(1-u\right)^q\right).$$
In this symmetric case, the admissible range is $a\in\left[-1,|\lambda|\right]$. The stochastic representation is a new result for the Huang--Kotz II family. Also note that $U_{[0]}$ has a simple representation: if $V,W\sim\Unif(0,1)$ are independent and $Z:=1-V^{1/q}\sim\mathrm{Beta}\left(1,q\right)$, then $U_{[0]}\eqd \max\left(W,Z\right)$, since $\p\left(\max\left(W,Z\right)\le u\right)=u\left(1-\left(1-u\right)^q\right)=F_{[0]}(u)$. In contrast, $U_{[1]}$ does not seem to admit a comparably simple closed-form representation in terms of elementary transforms. One exception is the case $q=2$. Letting $W\sim\Unif(0,1)$, we have
$$U_{[1]}\eqd \frac{2}{3}+\sqrt[3]{\frac{W-\frac{8}{9}}{3}}.$$
\end{example}

\begin{example}[Bairamov--Kotz--Bekçi kernels]\label{ex:BKB}

Take $g_1(u) = u(1-u^{p_1})^{q_1}$ and $g_2(v) = v(1-v^{p_2})^{q_2}$, so that $C(u_1,u_2)=u_1u_2(1+a(1-u_1^{p_1})^{q_1}(1-u_2^{p_2})^{q_2})$. This family was proposed in \cite{bairamov2001new} as an extension of the Huang--Kotz I family. For $k\in\left\{1, 2\right\}$, we have
$$\phi_k(t)=g_k'(t)=\left(1-t^{p_k}\right)^{q_k-1}\left(1-\left(1+p_k q_k\right)t^{p_k}\right).$$ 
Assume $q_1>1$ and $q_2>1$ so that $\phi_1$ and $\phi_2$ are bounded. Then $\sup\phi_1=\sup\phi_2=1$, hence $\Lambda_1=\Lambda_2=1$, and
$$\lambda_k=-\frac{\left(1+p_k q_k\right)^{q_k-1}}{p_k^{q_k}\left(q_k-1\right)^{q_k-1}},\qquad k\in\left\{1, 2\right\}.$$
Therefore $\pi_1 = 1/\left(1-\lambda_1\right)$ and $\pi_2 =1/\left(1-\lambda_2\right)$, and the auxiliary cdfs are
$$F_{U,[0]}(u)=u\left(1-\left(1-u^{p_1}\right)^{q_1}\right),\qquad F_{U,[1]}(u)=u\left(1-\lambda_1\left(1-u^{p_1}\right)^{q_1}\right),$$
$$F_{V,[0]}(v)=v\left(1-\left(1-v^{p_2}\right)^{q_2}\right),\qquad F_{V,[1]}(v)=v\left(1-\lambda_2\left(1-v^{p_2}\right)^{q_2}\right).$$
The admissible range is 
$$a \in \left[-\min\left(1, \frac{\left(1+p_1 q_1\right)^{q_1-1}}{p_1^{q_1}\left(q_1-1\right)^{q_1-1}}
\frac{\left(1+p_2 q_2\right)^{q_2-1}}{p_2^{q_2}\left(q_2-1\right)^{q_2-1}}\right),
\min\left(\frac{\left(1+p_1 q_1\right)^{q_1-1}}{p_1^{q_1}\left(q_1-1\right)^{q_1-1}},
\frac{\left(1+p_2 q_2\right)^{q_2-1}}{p_2^{q_2}\left(q_2-1\right)^{q_2-1}}\right)\right].$$

The distribution $F_{U,[0]}$ admits an explicit representation. Let $V,W\sim\Unif(0,1)$ be independent and define $S:=\left(1-V^{1/q_1}\right)^{1/p_1}$. Then $\p\left(S\le u\right)=1-\left(1-u^{p_1}\right)^{q_1}$ for $u\in[0,1]$, and therefore $U_{[0]}\eqd \max\left(W,S\right)$, since $\p\left(\max\left(W,S\right)\le u\right)=u\left(1-\left(1-u^{p_1}\right)^{q_1}\right)=F_{U,[0]}(u)$. An analogous representation holds for $V_{[0]}$. In contrast, the cdfs $F_{U,[1]}$ and $F_{V,[1]}$ do not appear to admit simple representations.
\end{example}

\subsection{Extremal Spearman's rho and Kendall's tau}\label{subsec:extreme-rho}

This subsection quantifies the maximal amount of dependence, 
as measured by Spearman's $\rho_S$ and Kendall's $\tau$, 
that can be achieved within Sarmanov copulas in \eqref{eq:farlie-2d}. 
We identify sharp global bounds (over all admissible kernels 
and all admissible dependence parameters) and find the extremal kernels, 
which turn out to be of ``two-block'' (checkerboard) type.

The bivariate Spearman rank coefficient is
\begin{equation}\label{eq:spearman-def}
   \rho_S=\mathrm{Corr}(U_1,U_2)=12\int_{[0,1]^2} u_1 u_2\d C(u_1,u_2)-3 =12\int_{[0,1]^2}\left(C(u_1,u_2)-u_1 u_2\right)\d u_1 \d u_2.
\end{equation}
Kendall's $\tau$ is defined as the difference between 
the probabilities of concordance and discordance, that is, 
$\tau=\p\left((X-X')(Y-Y')>0\right)-\p\left((X-X')(Y-Y')<0\right)$, 
where $(X', Y')$ is an independent copy of $(X, Y)$. 
If $C$ admits a density $c$, then 
$$\tau=4\int_{[0,1]^2} C(u_1,u_2)c(u_1,u_2)\d u_1 \d u_2 - 1.$$

\begin{remark}
   In \cite{fredricks2007relationship}, the authors observe that copulas 
   for which Spearman's rho and Kendall's tau are linked 
   by the identity $3\tau=2\rho_S$ satisfy the partial differential equation
$$\frac{\partial^2}{\partial u_1 \partial u_2}\log\left\lvert C(u_1,u_2)-u_1u_2\right\rvert=0,$$ 
whenever $C(u_1,u_2)\neq u_1 u_2$. 
Sarmanov copulas satisfy this PDE; therefore, 
finding one measure of concordance suffices to determine the other. 
\end{remark}

\begin{proposition}\label{prop:rho-tau-separable}
Let $G_1:=\int_0^1 g_1$ and $G_2:=\int_0^1 g_2$. Then
\begin{equation}
\label{eq:rho-formula}
\rho_S(C)=12\int_0^1\int_0^1\left(C(u_1,u_2)-u_1 u_2\right)\d u_1 \d u_2 =12 aG_1G_2.
\end{equation}
\end{proposition}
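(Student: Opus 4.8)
The plan is to substitute the Sarmanov form \eqref{eq:farlie-2d} directly into the integral representation of Spearman's rho and then exploit the product structure of the perturbation. I would start from the second expression in \eqref{eq:spearman-def}, namely $\rho_S(C)=12\int_{[0,1]^2}\left(C(u_1,u_2)-u_1u_2\right)\d u_1\d u_2$, and observe that the integrand collapses at once: subtracting the independence copula $u_1u_2$ from $C(u_1,u_2)=u_1u_2+ag_1(u_1)g_2(u_2)$ leaves exactly the separable term $ag_1(u_1)g_2(u_2)$. Thus the only content of the proposition is that this separable integrand integrates to a product.

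The second step is to factor the double integral. Since the centred copula is a product of a function of $u_1$ alone and a function of $u_2$ alone, Fubini's theorem gives
$$\int_{[0,1]^2} g_1(u_1)\,g_2(u_2)\d u_1\d u_2=\left(\int_0^1 g_1(u_1)\d u_1\right)\left(\int_0^1 g_2(u_2)\d u_2\right)=G_1G_2.$$
Fubini applies because the regularity conditions of Section \ref{ss:farlie-def} force $g_1$ and $g_2$ to be absolutely continuous with a.e. derivatives in $L^\infty([0,1])$, hence bounded and integrable on $[0,1]$; in particular the product is integrable on the unit square. Pulling the scalar $a$ out of the integral and multiplying by $12$ then yields $\rho_S(C)=12\,a\,G_1G_2$, which is the claim.

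There is no genuine obstacle in this argument: the entire result is a direct consequence of the additive-product structure of the Sarmanov perturbation, which makes $C-u_1u_2$ separable so that the double integral factors. The only points worth flagging explicitly are that the equality between the two integral forms of $\rho_S$ in \eqref{eq:spearman-def} is the standard Spearman identity and is taken as given, and that the boundary conditions $g_j(0)=g_j(1)=0$ play no role here (the constants $G_1,G_2$ need not vanish). Everything else is an elementary one-line substitution.
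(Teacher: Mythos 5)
Your proof is correct and follows essentially the same route as the paper's: substitute the Sarmanov form so that $C(u_1,u_2)-u_1u_2=a\,g_1(u_1)g_2(u_2)$, then apply Fubini--Tonelli to factor the double integral into $aG_1G_2$. Your additional remarks on why Fubini applies and on the irrelevance of the boundary conditions are accurate but not needed beyond what the paper states.
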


\begin{proof}
We have that $C(u_1,u_2)-u_1u_2=a g_1(u_1)g_2(u_2)$, and applying the 
Fubini--Tonelli theorem leads to
$$\int_0^1\int_0^1 \left(C(u_1,u_2)-u_1 u_2\right)\d u_1 \d u_2 =a\left(\int_0^1 g_1(u_1)\d u_1\right)\left(\int_0^1 g_2(u_2)\d u_2\right)=aG_1G_2,$$
which proves \eqref{eq:rho-formula}.
\end{proof}

Because $\rho_S(C)=12a G_1G_2$, finding the extremal values of $\rho_S$ 
reduces to understanding (i) the admissible magnitude of $a$, 
and (ii) the largest possible values of the signed ``areas'' 
$G_1$ and $G_2$ under the slope constraints on $g_1$ and $g_2$. 
The following theorem identifies the global extrema of $\rho_S$ over all 
Sarmanov copulas of the form \eqref{eq:farlie-2d} under the absolute-continuity and bounded-derivative conditions stated in Section \ref{ss:farlie-def}. 
The proof is given in Appendix \ref{sec:proofs}. 
This result was previously shown in \cite{shubina2004maximuma}, 
but we re-derive it in a way that is more natural in our setting. 
Rather than optimizing over the admissible Sarmanov kernels directly,
we view the model through its stochastic representation 
and identify the sharp bound from the resulting structure. 

\begin{theorem}\label{thm:global-extrema-rho}
Let $C$ be a Sarmanov copula as in \eqref{eq:farlie-2d} with 
kernels $g_1, g_2$ satisfying the absolute-continuity and 
bounded-derivative conditions stated in Section \ref{ss:farlie-def}. Then
\begin{equation}
\label{eq:rho-global-bound}
-\frac{3}{4} \le \rho_S(C) \le \frac{3}{4}.
\end{equation}
The bounds are sharp. 
\end{theorem}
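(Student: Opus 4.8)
The plan is to combine the exact identity $\rho_S(C)=12\,aG_1G_2$ from Proposition~\ref{prop:rho-tau-separable} with two sharp magnitude bounds, one on the signed areas $G_1,G_2$ and one on the admissible parameter $a$. The advantage of reasoning through the stochastic representation is that both bounds are expressed entirely in terms of the four constants $\Lambda_1,\lambda_1,\Lambda_2,\lambda_2$, after which the global extremum falls out of a short AM--GM argument.

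First I would bound $|G_i|$. Integrating by parts and using $g_i(0)=g_i(1)=0$ gives $G_i=\int_0^1(\tfrac12-u)\phi_i(u)\,\d u$, a linear functional of $\phi_i$ subject to the pointwise constraints $1/\lambda_i\le\phi_i\le 1/\Lambda_i$ together with the zero-mean constraint $\int_0^1\phi_i=0$ (the latter forced by $g_i(1)-g_i(0)=0$). Maximizing a linear functional over this feasible set is a bang--bang problem whose optimizer has a single switch: the extremal $\phi_i$ equals its supremum on $[0,\pi_i]$ and its infimum on $[\pi_i,1]$, i.e.\ a two-block (checkerboard) kernel, with the switch located exactly at $\pi_i=\Lambda_i/(\Lambda_i-\lambda_i)$ so that the zero-mean constraint is met. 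Evaluating $G_i$ at this extremizer yields the sharp bound $|G_i|\le\tfrac{1}{2(\Lambda_i-\lambda_i)}$. I expect this to be the main obstacle: the naive bound obtained from requiring only that the auxiliary means $\tfrac12+\Lambda_iG_i$ and $\tfrac12+\lambda_iG_i$ lie in $[0,1]$ is strictly weaker, giving $\tfrac{1}{2\max(\Lambda_i,|\lambda_i|)}$, which does not produce the constant $3/4$; the full constrained optimization is genuinely needed.

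Next I would invoke the admissible range for $a$ recorded after Theorem~\ref{thm:stochastic}, namely $|a|\le\max\bigl(\min(\Lambda_1\Lambda_2,\lambda_1\lambda_2),\,\min(\Lambda_1|\lambda_2|,\Lambda_2|\lambda_1|)\bigr)$. Writing $A=\Lambda_1$, $B=|\lambda_1|$, $C=\Lambda_2$, $D=|\lambda_2|$ (all positive), each inner minimum is bounded by $\sqrt{ABCD}$ through $\min(x,y)\le\sqrt{xy}$, while AM--GM gives $(A+B)(C+D)\ge 4\sqrt{ABCD}$; hence $|a|\le\tfrac14(A+B)(C+D)$. Combining with the two $|G_i|$ bounds yields $|\rho_S|=12|a|\,|G_1|\,|G_2|\le 12\cdot\tfrac14(A+B)(C+D)\cdot\tfrac{1}{2(A+B)}\cdot\tfrac{1}{2(C+D)}=\tfrac34$, which is exactly the two-sided bound in \eqref{eq:rho-global-bound}.

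Finally, for sharpness I would exhibit an explicit extremizer by tracing the equality cases: AM--GM forces $\Lambda_i=|\lambda_i|$ (hence $\pi_i=\tfrac12$) together with the two-block/tent kernel. Taking the symmetric tent kernel $g_1(u)=g_2(u)=\min(u,1-u)$, so that $\phi_i=\pm1$, $\Lambda_i=1$, $\lambda_i=-1$, and $G_i=\tfrac14$, the admissible range collapses to $a\in[-1,1]$; choosing $a=1$ gives $\rho_S=12\cdot1\cdot\tfrac14\cdot\tfrac14=\tfrac34$, and $a=-1$ gives $-\tfrac34$. A one-line check that the density factor $1+a\phi_1(u_1)\phi_2(u_2)\ge0$ confirms these are genuine copulas, so both bounds are attained and are therefore sharp.
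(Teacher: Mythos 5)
Your proposal is correct and follows essentially the same route as the paper: the identity $\rho_S=12aG_1G_2$, the sharp bound $|G_i|\le 1/(2(\Lambda_i+|\lambda_i|))$, the admissible range for $a$, an AM--GM step, and the tent kernel $\min(u,1-u)$ with $a=\pm1$ for sharpness. The only cosmetic differences are that you derive the $|G_i|$ bound by a bang--bang argument on $\phi_i$ whereas the paper integrates the pointwise envelopes $g_i(u)\le\min\left(u/\Lambda_i,\,|1/\lambda_i|(1-u)\right)$, and you package the AM--GM step via $\min(x,y)\le\sqrt{xy}$ uniformly rather than splitting on the sign of $a$.
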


The extremal values in Theorem \ref{thm:global-extrema-rho} 
are attained by checkerboard kernels. 
These kernels identified above admit a convenient Bernoulli-mixture interpretation. 
Take $U_{1,[1]}\sim \mathrm{Unif}(0,1/2)$, $U_{1,[0]}\sim \mathrm{Unif}(1/2,1)$ 
and similarly for $U_2$, all independent, and let $(I_1, I_2)$ 
be a bivariate Bernoulli vector with $\p(I_1=1)=\p(I_2=1)=1/2$ and 
$\p(I_1=1,I_2=1)=(1+\theta)/4$. Then, $U_1 = (1-I_1)U_{1,[0]} + I_1 U_{1,[1]}$ 
and $U_2 = (1-I_2)U_{2,[0]} + I_2 U_{2,[1]}$ have uniform margins, 
their copula is
$$C_\theta(u_1, u_2)=u_1u_2+\frac{\theta}{4}\left(1-\left|1-2u_1\right|\right)\left(1-\left|1-2u_2\right|\right),$$
and $\rho_S(C_\theta)=3\theta/4$. 
The extremal values $\rho_S=\pm 3/4$ are attained at $\theta=\pm 1$.

\subsection{Tail dependence}\label{ss:tail-dependence}

For a bivariate copula $C$, define the lower and upper 
tail dependence coefficients (when the limits exist) by
$$\lambda_L(C):=\lim_{u\downarrow0}\frac{C(u,u)}{u}, \qquad \lambda_U(C):=\lim_{u\uparrow1}\frac{1-2u+C(u,u)}{1-u}.$$

\begin{proposition}\label{prop:tail-independence}
For any Sarmanov copula $C$ of the form \eqref{eq:farlie-2d} 
under the absolute-continuity and bounded-derivative conditions 
stated in Section \ref{ss:farlie-def}, one has 
$\lambda_L(C)=\lambda_U(C)=0,$ that is, the tail dependence parameter 
is identically zero (independently of the kernels).
\end{proposition}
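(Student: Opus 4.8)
The plan is to substitute the diagonal $u_1=u_2=u$ directly into the Sarmanov form and isolate the perturbation term, showing that it is negligible relative to the linear normalizing factor at each corner. Writing $C(u,u)=u^2+a\,g_1(u)g_2(u)$, the lower-tail ratio becomes
$$\frac{C(u,u)}{u}=u+a\,\frac{g_1(u)g_2(u)}{u},$$
while for the upper tail I would first simplify the numerator using $1-2u+u^2=(1-u)^2$, giving $1-2u+C(u,u)=(1-u)^2+a\,g_1(u)g_2(u)$, so that
$$\frac{1-2u+C(u,u)}{1-u}=(1-u)+a\,\frac{g_1(u)g_2(u)}{1-u}.$$
In both expressions the leading term ($u$ or $1-u$) vanishes in the limit, so the proposition reduces to showing that each perturbation quotient tends to $0$.

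The key step is to quantify how fast $g_1$ and $g_2$ vanish at the corners. Because each $g_j$ is absolutely continuous with derivative $\phi_j\in L^\infty([0,1])$, it is Lipschitz with constant $\|\phi_j\|_\infty$. Combining this with the boundary condition $g_j(0)=0$ gives $|g_j(u)|=|g_j(u)-g_j(0)|\le\|\phi_j\|_\infty\,u$ near $0$, while $g_j(1)=0$ gives $|g_j(u)|=|g_j(u)-g_j(1)|\le\|\phi_j\|_\infty\,(1-u)$ near $1$. Hence $|g_1(u)g_2(u)|\le\|\phi_1\|_\infty\|\phi_2\|_\infty\,u^2$ as $u\downarrow 0$ and $|g_1(u)g_2(u)|\le\|\phi_1\|_\infty\|\phi_2\|_\infty\,(1-u)^2$ as $u\uparrow 1$. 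Dividing by $u$ (respectively $1-u$) leaves a bound of order $u$ (respectively $1-u$), which forces both perturbation quotients to $0$.

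Putting the two pieces together yields $\lambda_L(C)=\lim_{u\downarrow 0}(u+o(1))=0$ and $\lambda_U(C)=\lim_{u\uparrow 1}((1-u)+o(1))=0$, independently of $a$ and of the particular kernels. I expect no genuine obstacle here: the argument is elementary once one observes that the product $g_1g_2$ vanishes \emph{quadratically} at each corner, which is exactly one power of $u$ (or $1-u$) faster than the normalization. The only point requiring care is the justification of the linear vanishing rate of each individual kernel, and this is precisely where the bounded-derivative hypothesis enters; without it one could not rule out tail contributions from kernels that approach the corners too slowly.
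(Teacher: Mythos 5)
Your proposal is correct and follows essentially the same route as the paper's proof: both substitute the diagonal into the Sarmanov form, use absolute continuity with $\phi_j\in L^\infty$ to get the Lipschitz bounds $|g_j(u)|\le \|\phi_j\|_\infty\,u$ and $|g_j(u)|\le\|\phi_j\|_\infty\,(1-u)$, and conclude that the perturbation term vanishes one order faster than the normalization at each corner.
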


\begin{proof}
Let $L_k:=\|g_k'\|_\infty = \max(-1/\lambda_k, 1/\Lambda_k) <\infty$ 
for $k\in\{1,2\}$. Since $g_k(0)=g_k(1)=0$ and $g_k$ is absolutely continuous,
$$|g_k(u)|=\left|\int_0^u g_k'(t)\d t\right|\le L_k u, \qquad |g_k(1-u)|=\left|\int_{1-u}^1 g_k'(t)\d t\right|\le L_k u,\qquad u\in[0,1].$$
Using $C(u,u)=u^2+ag_1(u)g_2(u)$, we have
$$0\le \frac{C(u,u)}{u} = u + a \frac{g_1(u)g_2(u)}{u} \le u + |a| \frac{(L_1u)(L_2u)}{u} = (1+|a|L_1L_2)u,$$
which tends to zero as $u\downarrow0$, so $\lambda_L(C)=0$. For the upper tail, set $\overline u =1-u$ and note 
$1-2u+C(u,u)=\overline u^2+a g_1(1-\overline u)g_2(1-\overline u)$, hence
$$ 0\le \frac{1-2u+C(u,u)}{1-u} = \overline u + a \frac{g_1(1-\overline u)g_2(1-\overline u)}{\overline u} \le \overline u + |a| \frac{(L_1\overline u)(L_2\overline u)}{\overline u} = (1+|a|L_1L_2)\overline u,$$
which tends to zero as $\overline u \downarrow 0$, so $\lambda_U(C)=0$.
\end{proof}

\section{Multivariate extensions of Sarmanov copulas}\label{sec:dvariate}

While the bivariate Sarmanov perturbations discussed 
in Section \ref{sec:farlie} already cover a wide range of asymmetric kernels, 
modern applications frequently require high-dimensional dependence models. 
In principle, one could write down a $d$-variate perturbation 
and then verify the $d$-increasing property directly. 
However, this is both conceptually and technically challenging. 
In practice, this becomes rapidly intractable: the $d$-increasing constraints 
are combinatorial (they correspond to nonnegativity of all $2^d$ 
inclusion--exclusion increments) and are typically tedious once $d\ge 3$. 
The key advantage of stochastic representations 
such as those in \cite{blier-wong2022stochastic,blier-wong2024new} 
is that verifying the $d$-increasing property is no longer required: 
it suffices to construct $\bm U$ as a random vector with uniform margins 
using a Bernoulli-mixing mechanism based on lower-dimensional kernels; 
the resulting joint cdf is automatically a valid copula. 
Moving from $d=2$ to general $d$ is conceptually trivial, 
as one replaces a bivariate Bernoulli index by 
a $d$-variate Bernoulli random vector.

\subsection{General multivariate construction}\label{subsec:dvariate-general}

The following theorem proposes a d-variate extension of the bivariate 
Sarmanov class in $d$ dimensions, based on a $d$-variate Bernoulli mixture. 
The principle is to construct a random vector $\bm U$ 
with standard uniform margins by mixing two one-dimensional 
components in each coordinate, and encode all dependence 
with a latent multivariate Bernoulli vector. 
Since $\bm U$ has uniform margins, its joint cdf is automatically a copula.

We first introduce the notion of a $\pi$-calibrated pair of cdfs, 
which characterizes the auxiliary cdfs in Theorem \ref{thm:stochastic}.
\begin{definition}[$\pi$-calibrated pair]
    Fix $\pi\in(0,1)$. Two cdfs $F_{[0]}$ and $F_{[1]}$ 
    supported on $[0,1]$ are called $\pi$-calibrated 
    if their mixture is uniform, that is, they satisfy
\begin{equation}\label{eq:calibr}
    (1-\pi)F_{[0]}(u)+\pi F_{[1]}(u)=u,\qquad \forall u\in[0,1].
\end{equation}
\end{definition}
Let $I\sim\mathrm{Bernoulli}(\pi)$ be independent of $(U_{[0]},U_{[1]})$ with $U_{[0]}\sim F_{[0]}$ and $U_{[1]}\sim F_{[1]}$. If $(F_{[0]},F_{[1]})$ is a $\pi$-calibrated pair, then the mixture $(1-I)U_{[0]}+IU_{[1]}$ is uniformly distributed on $[0,1]$. 

In what follows, we let $\Pi(\bm u)=\prod_{m=1}^d u_m$ denote the independence copula in $d$ dimensions. Let $d\ge 2$ and write $[d]:=\{1,\dots,d\}$. Let $\bm I=(I_1,\dots,I_d)\in\{0,1\}^d$ be a multivariate Bernoulli vector with $\p(I_m=1)=:\pi_m\in(0,1), m\in[d].$ For each $m\in[d]$, choose a $\pi_m$-calibrated pair $(F_{m,[0]},F_{m,[1]})$ supported on $[0,1]$, and let $U_{m,[0]}\sim F_{m,[0]}$ and $U_{m,[1]}\sim F_{m,[1]}.$ Assume the collection $\{U_{m,[0]},U_{m,[1]}:m\in[d]\}$ is independent and independent of $\bm I$. Define 
\begin{equation}\label{eq:unif-mix}
   U_m := (1-I_m)U_{m,[0]} + I_m U_{m,[1]}, m\in[d],
\end{equation}
and write $\bm U=(U_1,\dots,U_d)$. For later use, define the cdf gaps and kernel factors
$$\Delta_m(u):=F_{m,[1]}(u)-F_{m,[0]}(u); \quad g_m(u):= \pi_m \Delta_m(u),\qquad u\in[0,1].$$
For each subset $S\subseteq[d]$ with $|S|\ge 2$, define the normalized mixed moments
\begin{equation}\label{eq:thetas}
    \theta_S :=\E\left[\prod_{m\in S}\frac{I_m-\pi_m}{\pi_m}\right].
\end{equation}

\begin{theorem}\label{thm:dvariate-bernoulli}
    Let $\bm U =(U_1,\dots,U_d)$ be defined as in \eqref{eq:unif-mix}. Then, $\bm U$ has copula  $C$ that satisfies, for all $\bm u=(u_1,\dots,u_d)\in[0,1]^d$,
    \begin{equation}\label{eq:cop-d}
        C(\bm u)=\Pi(\bm u) + \sum_{\substack{S\subseteq[d]\\ |S|\ge 2}}\theta_S \left(\prod_{m\notin S} u_m \right)\left(\prod_{m\in S} g_m(u_m)\right).
    \end{equation}
\end{theorem}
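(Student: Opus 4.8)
The plan is to compute the joint cdf of $\bm U$ directly by conditioning on the latent Bernoulli vector, and then expand the resulting expectation multilinearly in the two branches of each coordinate. First I would note that, by the $\pi_m$-calibration property recalled just before the theorem, each $U_m$ is uniform on $[0,1]$; hence the joint cdf of $\bm U$ coincides with its copula $C$, and it suffices to evaluate $C(\bm u)=\p(U_1\le u_1,\dots,U_d\le u_d)$.

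Next I would condition on $\bm I=\bm i$. Because the family $\{U_{m,[0]},U_{m,[1]}:m\in[d]\}$ is mutually independent and independent of $\bm I$, conditioning on $\bm I=\bm i$ leaves these variables independent and simply selects $U_m=U_{m,[i_m]}$ in each coordinate. Thus, given $\bm I=\bm i$, the coordinates are independent, so
$$\p\bigl(\bm U\le \bm u \,\big|\, \bm I=\bm i\bigr)=\prod_{m=1}^d F_{m,[i_m]}(u_m),$$
and averaging over $\bm I$ gives $C(\bm u)=\E\bigl[\prod_{m=1}^d F_{m,[I_m]}(u_m)\bigr]$.

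The key algebraic step is to rewrite each factor in a unified affine form. From the calibration identity $(1-\pi_m)F_{m,[0]}+\pi_m F_{m,[1]}=\mathrm{id}$ together with $\Delta_m=F_{m,[1]}-F_{m,[0]}$, one gets $F_{m,[0]}(u)=u-\pi_m\Delta_m(u)$, and hence, since $i_m\in\{0,1\}$ and $g_m=\pi_m\Delta_m$,
$$F_{m,[i_m]}(u_m)=F_{m,[0]}(u_m)+i_m\Delta_m(u_m)=u_m+\frac{i_m-\pi_m}{\pi_m}\,g_m(u_m).$$
Substituting this into the product and expanding multilinearly over the two summands in each coordinate yields a sum over subsets $S\subseteq[d]$, where $S$ indexes the coordinates contributing the perturbation term:
$$\prod_{m=1}^d\Bigl(u_m+\tfrac{I_m-\pi_m}{\pi_m}g_m(u_m)\Bigr)=\sum_{S\subseteq[d]}\Bigl(\prod_{m\notin S}u_m\Bigr)\Bigl(\prod_{m\in S}g_m(u_m)\Bigr)\prod_{m\in S}\frac{I_m-\pi_m}{\pi_m}.$$
Taking expectations and using that $u_m$ and $g_m(u_m)$ are deterministic, the coefficient of each subset term becomes $\E[\prod_{m\in S}(I_m-\pi_m)/\pi_m]$.

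Finally I would sort the terms by cardinality of $S$. The empty set gives $\Pi(\bm u)$; each singleton $S=\{j\}$ contributes $\E[(I_j-\pi_j)/\pi_j]=0$ since $\E[I_j]=\pi_j$; and each $S$ with $|S|\ge 2$ contributes exactly $\theta_S$ as defined in \eqref{eq:thetas}. Collecting these recovers \eqref{eq:cop-d}. The computation is essentially routine; the only point requiring genuine care is the conditional-independence argument, namely that conditioning on $\bm I$ simultaneously factorizes the joint cdf across coordinates and leaves each selected $U_{m,[i_m]}$ with its unconditional law $F_{m,[i_m]}$. This rests entirely on the mutual independence of the component pool and its independence from $\bm I$, so I would state it carefully before proceeding to the algebra.
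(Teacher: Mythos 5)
Your proof is correct and follows essentially the same route as the paper's: conditioning on $\bm I$ to factorize the joint cdf, rewriting each factor as $u_m+g_m(u_m)(I_m-\pi_m)/\pi_m$ via the calibration identity, expanding multilinearly over subsets, and killing the singleton terms because $\E[I_m]=\pi_m$. No gaps.
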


\begin{proof}
Fix $m\in[d]$. Since $U_m=(1-I_m)U_{m,[0]}+I_m U_{m,[1]}$ and $I_m$ is independent of $(U_{m,[0]},U_{m,[1]})$, we have that $\p(U_m\le u)=(1-\pi_m)F_{m,[0]}(u)+\pi_m F_{m,[1]}(u)=u$ by the $\pi_m$-calibration condition \eqref{eq:calibr}. Hence $U_m\sim\mathrm{Unif}(0,1)$ for every $m$. For $u\in[0,1]^d$, conditional on $\bm I$ the coordinates are independent, so
\begin{equation}\label{eq:copula-d-expected}
    C(\bm u)=\p(\bm U\le \bm u) =\E\left[\prod_{m=1}^d F_{m,[I_m]}(u_m)\right].
\end{equation}
Let $Z_m:=(I_m-\pi_m)/\pi_m$ so that $\E[Z_m]=0$. From \eqref{eq:calibr} and $\Delta_m=F_{m,[1]}-F_{m,[0]}$, we have $F_{m,[0]}(u)=u-\pi_m\Delta_m(u)$ and $F_{m,[1]}(u)=u+(1-\pi_m)\Delta_m(u),$ and therefore for $I_m\in\{0,1\}$,
\begin{equation}\label{eq:cdf-from-z}
   F_{m,[I_m]}(u)=u+(I_m-\pi_m)\Delta_m(u)=u+g_m(u)Z_m.
\end{equation}
Substituting into \eqref{eq:copula-d-expected} yields
$$C(\bm u)=\E\left[\prod_{m=1}^d \left(u_m+g_m(u_m)Z_m\right)\right].$$

Expand the product over subsets $S\subseteq[d]$ to get 
$$\prod_{m=1}^d(u_m+g_m(u_m) Z_m)= \Pi(\bm u) + \sum_{\substack{S\subseteq[d]\\ |S|\ge 1}}\left(\prod_{m\notin S} u_m\right)\left(\prod_{m\in S} g_m(u_m)\right)\left(\prod_{m\in S} Z_m\right).$$
The $|S|=0$ term is $\Pi(\bm u)$, and the $|S|=1$ terms vanish since $\mathbb E[Z_m]=0$. Taking expectations yields \eqref{eq:cop-d} with coefficients $\theta_S$ defined in \eqref{eq:thetas}. 
\end{proof}

The representation \eqref{eq:cop-d} yields a generalized FGM-type subset expansion, with dependence parameters \eqref{eq:thetas} given by centred mixed moments of the Bernoulli vector $\bm I$, extending the bivariate covariance-type parameter in \eqref{eq:theta-param}. When $d=2$, \eqref{eq:cop-d} reduces exactly to the bivariate Sarmanov form \eqref{eq:farlie-2d}. For $d\ge 3$, higher-order interaction terms are present in general; the copula in \eqref{eq:cop-d} is automatically valid as long as the parameters $\theta_S$ are compatible with a valid Bernoulli pmf for $\bm I$ and the kernels $g_m$ arise from $\pi_m$-calibrated pairs. 

\subsection{Trivariate Sarmanov copulas}\label{subsec:trivariate}

For $d=3$, the subset expansion becomes particularly transparent and already illustrates the emergence of pairwise and three-way interaction terms. The following is the direct specialization of Theorem \ref{thm:dvariate-bernoulli}. 

\begin{corollary}\label{cor:trivariate}
Let $(I_1,I_2,I_3)\in\{0,1\}^3$ be a trivariate Bernoulli vector with $\p(I_1=1)=\pi_1$, $\p(I_2=1)=\pi_2$, and $\p(I_3=1)=\pi_3$. Construct $(U_1,U_2,U_3)$ through \eqref{eq:unif-mix} as in Theorem \ref{thm:dvariate-bernoulli}, with kernels $g_1,g_2,g_3$. Then the copula of $(U_1,U_2,U_3)$ is
\begin{align}\label{eq:trivariate-copula}
C(u_1,u_2,u_3) &=u_1 u_2 u_3 +\theta_{12}g_1(u_1)g_2(u_2)u_3 +\theta_{13}g_1(u_1)u_2g_3(u_3) +\theta_{23}u_1g_2(u_2)g_3(u_3)\nonumber\\
&\qquad\qquad\qquad +\theta_{123}g_1(u_1)g_2(u_2)g_3(u_3),
\end{align}
where 
$$
\begin{aligned}
\theta_{12} &= \frac{\E\left[(I_1-\pi_1)(I_2-\pi_2)\right]}{\pi_1 \pi_2}, &\qquad
\theta_{13} &= \frac{\E\left[(I_1-\pi_1)(I_3-\pi_3)\right]}{\pi_1 \pi_3},\\[0.8ex]
\theta_{23} &= \frac{\E\left[(I_2-\pi_2)(I_3-\pi_3)\right]}{\pi_2 \pi_3},
&\qquad \theta_{123} &= \frac{\E\left[(I_1-\pi_1)(I_2-\pi_2)(I_3-\pi_3)\right]}{\pi_1 \pi_2 \pi_3}.
\end{aligned}$$
It follows that one never needs to check the $3$-increasing property directly: \eqref{eq:trivariate-copula} is a copula because $(U_1, U_2, U_3)$ is a random vector with standard uniform margins.
\end{corollary}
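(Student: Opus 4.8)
The plan is to treat this as a pure specialization of Theorem~\ref{thm:dvariate-bernoulli}: the hypotheses of the corollary (a trivariate Bernoulli index $(I_1,I_2,I_3)$, kernels $g_1,g_2,g_3$ arising from $\pi_m$-calibrated pairs, and the mixture construction \eqref{eq:unif-mix}) are exactly the $d=3$ instance of the theorem's hypotheses, so I would invoke the subset expansion \eqref{eq:cop-d} directly rather than reprove anything. In particular, the facts that each $U_m\sim\mathrm{Unif}(0,1)$ and that the copula admits the stated form are already established; what remains is bookkeeping.

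First I would enumerate the subsets $S\subseteq\{1,2,3\}$ with $|S|\ge 2$. These are the three pairs $\{1,2\}$, $\{1,3\}$, $\{2,3\}$ and the full triple $\{1,2,3\}$, four sets in total, matching the four nontrivial terms in \eqref{eq:trivariate-copula}. For each such $S$, the summand in \eqref{eq:cop-d} is $\theta_S(\prod_{m\notin S}u_m)(\prod_{m\in S}g_m(u_m))$, so the only point requiring attention is to read off the complementary index set $[3]\setminus S$ correctly: for $S=\{1,2\}$ the spectator coordinate is $u_3$, for $S=\{1,3\}$ it is $u_2$, for $S=\{2,3\}$ it is $u_1$, and for $S=\{1,2,3\}$ the product over $m\notin S$ is empty. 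Adding the $|S|=0$ term $\Pi(u_1,u_2,u_3)=u_1u_2u_3$ reproduces \eqref{eq:trivariate-copula} line by line.

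Next I would specialize the coefficients. Definition \eqref{eq:thetas} gives $\theta_S=\E[\prod_{m\in S}(I_m-\pi_m)/\pi_m]$; evaluating this at each of the four subsets yields exactly the displayed covariance-type expressions for $\theta_{12},\theta_{13},\theta_{23}$ and the third-order mixed moment $\theta_{123}$, after pulling the deterministic normalizers $\pi_m$ out of the expectation.

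Finally, the closing claim, that one never needs to verify $3$-increasing, follows immediately from the construction rather than from any inequality: Theorem~\ref{thm:dvariate-bernoulli} guarantees that $(U_1,U_2,U_3)$ is an honest random vector with standard uniform margins, so its joint cdf is by definition a copula and \eqref{eq:d-increasing} holds automatically. I do not expect any genuine obstacle here; the entire content is the correct enumeration of the four subsets and the matching of each to its complementary spectator coordinates, so the proof is essentially immediate once Theorem~\ref{thm:dvariate-bernoulli} is in hand.
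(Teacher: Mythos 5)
Your proposal is correct and matches the paper exactly: the paper presents this corollary as ``the direct specialization of Theorem \ref{thm:dvariate-bernoulli}'' with no separate proof, and your enumeration of the four subsets $S\subseteq\{1,2,3\}$ with $|S|\ge 2$, the reading-off of $\theta_S$ from \eqref{eq:thetas}, and the observation that validity follows from $(U_1,U_2,U_3)$ having uniform margins are precisely the intended bookkeeping.
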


We can therefore construct a wide variety of trivariate Sarmanov copulas by choosing appropriate $\pi$-calibrated pairs in each margin, and an appropriate trivariate Bernoulli pmf for $(I_1, I_2, I_3)$. As a concrete example, we now construct a trivariate extension of the bivariate BKB family in Example \ref{ex:BKB}. Let $g_1(u)=u(1-u^{p_1})^{q_1}$, $g_2(u)=u(1-u^{p_2})^{q_2}$, and $g_3(u)=u(1-u^{p_3})^{q_3}$, which yields
\begin{align}\label{eq:BK-trivariate}
C(u_1,u_2,u_3) =u_1u_2u_3\big(1 
&+\theta_{12}(1-u_1^{p_1})^{q_1}(1-u_2^{p_2})^{q_2}
+\theta_{13}(1-u_1^{p_1})^{q_1}(1-u_3^{p_3})^{q_3}\nonumber\\
&+\theta_{23}(1-u_2^{p_2})^{q_2}(1-u_3^{p_3})^{q_3}
+\theta_{123}(1-u_1^{p_1})^{q_1}(1-u_2^{p_2})^{q_2}(1-u_3^{p_3})^{q_3}\big).
\end{align}
The parameters $\theta_{12},\theta_{13},\theta_{23},\theta_{123}$ must be compatible with a valid trivariate Bernoulli pmf for $(I_1,I_2,I_3)$, which is easily verified using some results of \cite{teugels1990representations}. 

\subsection{Exchangeable Sarmanov copulas}\label{subsec:exchangeable-dvariate}

Many applications require the dependence model to be exchangeable, that is, invariant under permutations of the coordinates.
Within the Bernoulli-mixture construction of Theorem \ref{thm:dvariate-bernoulli}, exchangeability (and, when desired, radial symmetry)
can be imposed through simple symmetry constraints on (i) the marginal $\pi$-calibrated pairs and (ii) the latent Bernoulli index vector.

\begin{definition}\label{def:exchangeable-radial}
A $d$-copula $C$ is exchangeable if
$$C(u_1,\dots,u_d)=C\left(u_{\sigma(1)},\dots,u_{\sigma(d)}\right)$$
for all permutations $\sigma$ of $[d]$. 

A random vector $\bm X=(X_1,\dots,X_d)$ with continuous margins is radially symmetric about $\bm a\in\mathbb R^d$ if $\bm X-\bm a\eqd \bm a-\bm X$. For copulas, we only need the case $\bm a=\frac12\bm 1$, for which radial symmetry reduces to $\bm U\eqd \bm 1-\bm U$ componentwise. In the bivariate case, if $C$ is the copula of $(U_1,U_2)$, radial symmetry about $(1/2,1/2)$ is equivalent to $C(u_1,u_2)=u_1+u_2-1+C(1-u_1,1-u_2)$ for all $(u_1,u_2)\in[0,1]^2$; see, for instance, \cite[Section 2.7]{nelsen2006introduction}.
\end{definition}

\begin{proposition}\label{prop:exchangeable-radial-farlie}
Let $\bm U$ be constructed as in Theorem \ref{thm:dvariate-bernoulli}.
\begin{enumerate}
   \item Assume that the calibrated pairs are identical across margins, that is,
$$\pi_m = \pi; \quad (F_{m,[0]}, F_{m,[1]}) = (F_{[0]},F_{[1]})$$ 
for $m=1,\dots,d$, and that the Bernoulli index vector $\bm I$ is exchangeable. Then the copula of $\bm U$ is exchangeable and can be written in the reduced form
\begin{equation}\label{eq:exchangeable-farlie}
C(\bm u)
= \Pi(\bm u) +\sum_{k=2}^d \theta_k
\sum_{\substack{S\subseteq[d]\\|S|=k}}\left(\prod_{m\notin S} u_m\right)\left(\prod_{m\in S} g(u_m)\right),
\end{equation}
where $g(u)=\pi(F_{[1]}(u)-F_{[0]}(u))$ and, for any $k=2,\dots,d$,
\begin{equation}\label{eq:theta-k-exchangeable}
\theta_k=\E\left[\prod_{m=1}^k \frac{I_m-\pi}{\pi}\right],
\end{equation}
which is well-defined (does not depend on the choice of $k$ indices) by exchangeability of $\bm I$.
   \item Assume furthermore that $\pi=\frac12$, that the pair $(F_{[0]},F_{[1]})$ satisfies the reflection identity
\begin{equation}\label{eq:reflection-calibrated-pair}
F_{[1]}(u)=1-F_{[0]}(1-u),\qquad u\in[0,1],
\end{equation}
and that the Bernoulli vector is palindromic in the sense that $\bm I\eqd \bm 1-\bm I$. Then $\bm U$ is radially symmetric about $\frac12\bm 1$. With $Z_m:=2I_m-1\in\{-1,1\}$ one has
$\theta_k=\E[Z_1\cdots Z_k]$ and $\theta_k=0$ for all odd $k$.
\end{enumerate}

\end{proposition}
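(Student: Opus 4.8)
The plan is to treat the two parts in turn, both by specializing machinery already in place rather than by any fresh computation. For part 1, I would start directly from the subset expansion \eqref{eq:cop-d} of Theorem \ref{thm:dvariate-bernoulli}, which already gives the copula of $\bm U$ for arbitrary calibrated pairs. Imposing the homogeneity hypothesis collapses $\pi_m\equiv\pi$ and $g_m\equiv g$, so every factor $\prod_{m\in S}g_m(u_m)$ becomes $\prod_{m\in S}g(u_m)$. The only remaining object that could distinguish subsets is the coefficient $\theta_S=\E[\prod_{m\in S}(I_m-\pi)/\pi]$; here exchangeability of $\bm I$ enters, since the law of any $k$-tuple of coordinates is the same, so $\theta_S$ depends only on $|S|=k$ and equals $\theta_k$ as in \eqref{eq:theta-k-exchangeable}, which is therefore well-defined. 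Grouping the terms of \eqref{eq:cop-d} by cardinality then yields \eqref{eq:exchangeable-farlie} verbatim. For the exchangeability of the copula itself, the cleanest route is to argue at the level of the random vector: with identical independent marginal constructions and an exchangeable index $\bm I$, the vector $\bm U$ is exchangeable, and since its margins are uniform its joint cdf coincides with the copula, so the copula inherits symmetry in its arguments.

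For part 2 I would dispatch the algebraic claim first. At $\pi=\tfrac12$ the standardized increment $(I_m-\pi)/\pi$ equals $2I_m-1=Z_m\in\{-1,1\}$, so $\theta_k=\E[Z_1\cdots Z_k]$ is immediate from \eqref{eq:theta-k-exchangeable}. The palindromic assumption $\bm I\eqd\bm 1-\bm I$ is equivalent to $\bm Z\eqd-\bm Z$; applying this to the product gives $Z_1\cdots Z_k\eqd(-1)^kZ_1\cdots Z_k$, hence $\theta_k=(-1)^k\theta_k$, which forces $\theta_k=0$ whenever $k$ is odd.

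The substantive part is the radial symmetry $\bm U\eqd\bm 1-\bm U$, and here I would show that $\bm 1-\bm U$ admits exactly the same stochastic representation as $\bm U$. The first step is to identify the conditional law of each reflected coordinate: conditioning on $I_m$ and using the reflection identity \eqref{eq:reflection-calibrated-pair}, one finds $1-U_m\mid\{I_m=0\}\sim F_{[1]}$ and $1-U_m\mid\{I_m=1\}\sim F_{[0]}$, so the two component distributions swap roles under reflection. This lets me rewrite $1-U_m=(1-I_m')\tilde U_{m,[0]}+I_m'\tilde U_{m,[1]}$ with the flipped index $I_m':=1-I_m$ and reflected components $\tilde U_{m,[0]}:=1-U_{m,[1]}\sim F_{[0]}$ and $\tilde U_{m,[1]}:=1-U_{m,[0]}\sim F_{[1]}$. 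I would then check that the ingredients of this representation have the same joint law as those of the original: the reflected components are mutually independent with the correct marginals, the flipped index $\bm I'=\bm 1-\bm I$ is independent of them (being a deterministic function of $\bm I$), and $\bm I'\eqd\bm I$ by the palindromic hypothesis. Since $\bm 1-\bm U$ is thus the image of ingredients distributed identically to those defining $\bm U$ under the same measurable mixing map \eqref{eq:unif-mix}, I conclude $\bm 1-\bm U\eqd\bm U$, i.e.\ radial symmetry about $\tfrac12\bm 1$.

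I expect the bookkeeping in this last step to be the main obstacle, for two reasons. First, one must justify $\p(U_{m,[i]}\ge 1-u)=1-F_{[i]}(1-u)$ without an atom correction; this is legitimate because calibration \eqref{eq:calibr} forces the uniform mixture $(1-\pi)F_{[0]}+\pi F_{[1]}$ to be continuous, and a sum of non-decreasing functions with positive weights can be continuous only if each summand is, so both $F_{[0]}$ and $F_{[1]}$ are atomless. Second, the role-swap of the two component distributions together with the index flip must be tracked carefully so that the triple consisting of index law, component laws, and independence structure of $\bm 1-\bm U$ lines up exactly with that of $\bm U$; it is precisely the combination of the reflection identity and the palindromic assumption that closes this matching.
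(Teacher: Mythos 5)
Your proposal is correct and follows essentially the same route as the paper's proof: part 1 by collapsing the subset expansion of Theorem \ref{thm:dvariate-bernoulli} using exchangeability of $\bm I$, and part 2 by showing that $\bm 1-\bm U$ admits the same mixture representation with the flipped index $\bm I'=\bm 1-\bm I$ and invoking $\bm I'\eqd\bm I$, plus the sign argument $\theta_k=(-1)^k\theta_k$ for odd $k$. Your extra care about atomlessness of $F_{[0]},F_{[1]}$ and the explicit pathwise identification of the reflected components is a slightly more detailed version of the same argument, not a different one.
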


\begin{proof}
Part (i) follows from Theorem \ref{thm:dvariate-bernoulli} and the fact that, under the stated assumptions, all mixed moments $\theta_S=\E[\prod_{m\in S}(I_m-\pi)/\pi]$ depend on $S$ only through $|S|$.

For part (ii), \eqref{eq:reflection-calibrated-pair} is equivalent to $U_{m,[1]}\eqd 1-U_{m,[0]}$ for each $m$. Hence
$$1-U_m=(1-I_m)(1-U_{m,[0]})+I_m(1-U_{m,[1]})\eqd (1-I_m)U_{m,[1]}+I_mU_{m,[0]}.$$
Writing $I_m':=1-I_m$, the right-hand side equals $(1-I_m')U_{m,[0]}+I_m'U_{m,[1]}$, that is, it is distributed as $U_m$ constructed with index $I_m'$ instead of $I_m$. Therefore $\bm 1-\bm U$ has the same distribution as the construction in Theorem \ref{thm:dvariate-bernoulli} with index vector $\bm I'=\bm 1-\bm I$, and the claim $\bm U\eqd \bm 1-\bm U$ follows from $\bm I'\eqd \bm I$.
Finally, when $\pi=\frac12$, we have $\theta_k=\E[Z_1\cdots Z_k]$; symmetry implies $(Z_1,\dots,Z_d)\eqd-(Z_1,\dots,Z_d)$,
hence $\theta_k=(-1)^k\theta_k$, so $\theta_k=0$ for odd $k$.
\end{proof}

\begin{remark}\label{rem:q-half-kernel-shape}
In Theorem \ref{thm:stochastic}, the marginal Bernoulli success probabilities are
$\pi_1=\Lambda_1/(\Lambda_1-\lambda_1)$ and $\pi_2=\Lambda_2/(\Lambda_2-\lambda_2)$.
Thus $\pi_1=\frac12$ if and only if $\Lambda_1=-\lambda_1$, equivalently
$\esssup_{[0,1]}\phi_1=-\essinf_{[0,1]}\phi_1$, and similarly for $\phi_2$.
A convenient sufficient structural condition is symmetry of the kernel about $1/2$,
\begin{equation}\label{eq:symmetric-lift}
g_1(1-u)=g_1(u)\quad\text{for all }u\in[0,1]
\end{equation}
which forces $\phi_1(1-u)=-\phi_1(u)$ and hence $\Lambda_1=-\lambda_1$. When \eqref{eq:symmetric-lift} holds (and $\Lambda_1=-\lambda_1$), the auxiliary cdfs in Theorem \ref{thm:stochastic} satisfy the reflection identity $F_{U,[1]}(u)=1-F_{U,[0]}(1-u)$, so the two mixture components are mirror images around $1/2$. This includes the classical FGM kernel $g(u)=u(1-u)$, but excludes the HKI/HKII kernels unless the exponent reduces to the FGM case.
\end{remark}

\begin{remark}\label{rem:exch-sym-bernoulli}
If $\bm I$ is exchangeable with $\p(I_m=1)=\pi$, its distribution is characterized by the distribution of the sum $S_d:=\sum_{m=1}^d I_m$. Writing $w_j:=\p(S_d=j)$, one has $\p(\bm I=\bm i)=w_{|\bm i|}/\binom{d}{|\bm i|}$ for $\bm i\in\{0,1\}^d,$ so the admissibility constraints reduce to $w_j\ge0$ and $\sum_{j=0}^d w_j=1$. In the symmetric case where $\pi=\frac12$, the additional requirement $\bm I\eqd \bm 1-\bm I$ is equivalent to $w_j=w_{d-j}$, $j=0,\dots,d$, and implies that all odd-order coefficients $\theta_k$ vanish in \eqref{eq:exchangeable-farlie}. This is precisely the latent structure underlying the exchangeable FGM copulas studied in \cite{blier-wong2024exchangeable}. The extremal exchangeable symmetric Bernoulli distributions (hence extremal copulas in the supermodular order) correspond to the two-point ``ray'' distributions for $S_d$ (see, for instance, \cite[Proposition 4.5]{fontana2021model}). We return to END/EPD bounds and their effect on $\rho_d^\pm$ in Subsection \ref{subsec:d-variate-rho}.
\end{remark}

\subsection{Multivariate orthant Spearman's rho and extremal dependence}\label{subsec:d-variate-rho}

Let $C$ be a $d$-variate copula of the form \eqref{eq:cop-d} in Theorem \ref{thm:dvariate-bernoulli}. Define the kernel integrals
$$\kappa_m:=\int_0^1 g_m(u)\d u, \qquad \kappa_S:=\prod_{m\in S}\kappa_m.$$

The average upper- and lower-orthant Spearman coefficients were proposed in \cite{nelsen1996nonparametric} and further studied in \cite{gijbels2021specification}. They are defined as
\begin{align}
\rho_d^+(C)
&=\frac{d+1}{2^d-(d+1)}\left(2^d\int_{[0,1]^d}\Pi(\bm u)\d C(\bm u)-1\right),\label{eq:rho-plus}\\
\rho_d^-(C)
&=\frac{d+1}{2^d-(d+1)}\left(2^d\int_{[0,1]^d}C(\bm u)\d \bm u-1\right).\label{eq:rho-minus}
\end{align}
Within the class of d-variate Sarmanov copulas introduced in this section, these parameters can be computed in closed form as follows.
\begin{proposition}\label{prop:rho-d-closed-form}
Let $C$ be given by \eqref{eq:cop-d}. Then
\begin{align}
\rho_d^-(C)
&=\frac{d+1}{2^d-(d+1)}\sum_{\substack{S\subseteq[d]\\|S|\ge2}} 2^{|S|} \theta_S \kappa_S,
\label{eq:rho-d-minus-farlie}\\
\rho_d^+(C)
&=\frac{d+1}{2^d-(d+1)}\sum_{\substack{S\subseteq[d]\\|S|\ge2}} (-1)^{|S|}2^{|S|} \theta_S \kappa_S.
\label{eq:rho-d-plus-farlie}
\end{align}
If all odd-order parameters are zero (that is, $\theta_S=0$ for odd $|S|$), then $\rho_d^+(C)=\rho_d^-(C)$.
\end{proposition}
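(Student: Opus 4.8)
The plan is to exploit the fact that both $\rho_d^-$ and $\rho_d^+$ are affine functionals of $C$, so by linearity of the subset expansion \eqref{eq:cop-d} it suffices to evaluate two families of integrals term by term and then reassemble. The only genuine difference between the two coefficients is the measure against which integration is performed: $\rho_d^-$ integrates $C$ against Lebesgue measure, whereas $\rho_d^+$ integrates $\Pi$ against $\d C$.

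For $\rho_d^-$ I would substitute \eqref{eq:cop-d} into \eqref{eq:rho-minus} and integrate over $[0,1]^d$ directly. Each subset term $\theta_S(\prod_{m\notin S}u_m)(\prod_{m\in S}g_m(u_m))$ factorizes across coordinates, so Fubini--Tonelli turns its integral into a product of one-dimensional integrals: $\prod_{m\notin S}\int_0^1 u\,\d u = 2^{-(d-|S|)}$ and $\prod_{m\in S}\int_0^1 g_m = \kappa_S$. Together with $\int_{[0,1]^d}\Pi = 2^{-d}$ this gives $\int_{[0,1]^d} C\,\d\bm u = 2^{-d}(1+\sum_{|S|\ge2}2^{|S|}\theta_S\kappa_S)$; multiplying by $2^d$, subtracting $1$, and inserting the normalizing constant yields \eqref{eq:rho-d-minus-farlie}.

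For $\rho_d^+$ I would compute $\int_{[0,1]^d}\Pi\,\d C = \E[U_1\cdots U_d]$ directly from the stochastic representation of Theorem \ref{thm:dvariate-bernoulli} rather than through a density. Conditioning on $\bm I$ and using the coordinatewise conditional independence gives $\E[U_1\cdots U_d] = \E[\prod_m \E[U_m\mid I_m]]$. For a $[0,1]$-valued variable $\E[U_m\mid I_m] = 1-\int_0^1 F_{m,[I_m]}(u)\,\d u$, and \eqref{eq:cdf-from-z} together with $\kappa_m=\int_0^1 g_m$ gives the key identity $\E[U_m\mid I_m] = \tfrac12 - Z_m\kappa_m$, where $Z_m=(I_m-\pi_m)/\pi_m$. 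Expanding $\prod_m(\tfrac12-Z_m\kappa_m)$ over subsets, taking expectations, and using $\E[Z_m]=0$ and $\E[\prod_{m\in S}Z_m]=\theta_S$ from \eqref{eq:thetas} produces $\E[U_1\cdots U_d]=2^{-d}(1+\sum_{|S|\ge2}(-1)^{|S|}2^{|S|}\theta_S\kappa_S)$, which gives \eqref{eq:rho-d-plus-farlie} after the same rescaling. The final claim is then immediate: when $\theta_S=0$ for all odd $|S|$, every surviving term has $(-1)^{|S|}=1$, so the two sums coincide.

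There is no serious obstacle here, since every subset term factorizes; the one point requiring care is the $\rho_d^+$ integral, which is taken against $\d C$ rather than Lebesgue measure and therefore does not reduce to integrating $C$. The alternating sign $(-1)^{|S|}$ that distinguishes $\rho_d^+$ from $\rho_d^-$ comes entirely from this measure change, encoded in the identity $\E[U_m\mid I_m]=\tfrac12-Z_m\kappa_m$ (equivalently $\int_0^1 u\,\d g_m(u)=-\kappa_m$, which uses $g_m(1)=0$). If one prefers to work with the copula density $c(\bm u)=1+\sum_{|S|\ge2}\theta_S\prod_{m\in S}g_m'(u_m)$, the same identity reappears as the integration-by-parts evaluation $\int_0^1 u\,g_m'(u)\,\d u=-\kappa_m$.
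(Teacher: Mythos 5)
Your proposal is correct and follows essentially the same route as the paper's proof: direct term-by-term integration via Fubini--Tonelli for $\rho_d^-$, and for $\rho_d^+$ the computation of $\E[U_1\cdots U_d]$ through the stochastic representation using $\E[U_m\mid I_m]=\tfrac12-\kappa_m Z_m$ and the vanishing of first-order terms. No gaps.
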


\begin{proof}
We start with $\rho_d^-$. From \eqref{eq:cop-d} and the Fubini--Tonelli theorem, we have
$$\int_{[0,1]^d}C(\bm u)\d\bm u =\int_{[0,1]^d} \Pi(\bm u) \d \bm u +\sum_{|S|\ge2}\theta_S
\left(\prod_{m\notin S}\int_0^1 u_m\d u_m\right)
\left(\prod_{m\in S}\int_0^1 g_m(u_m)\d u_m\right).$$
Hence 
$$\int_{[0,1]^d}C(\bm u)\d\bm u = 2^{-d}+\sum_{|S|\ge2}\theta_S 2^{-(d-|S|)}\kappa_S,$$
and \eqref{eq:rho-d-minus-farlie} follows since
$$2^d\int_{[0,1]^d}C(\bm u)\d\bm u-1=\sum_{|S|\ge2}2^{|S|}\theta_S\kappa_S.$$

For $\rho_d^+$, use the stochastic representation in Theorem 
\ref{thm:dvariate-bernoulli}. Conditional on $I_m$, the components are independent and we have from \eqref{eq:cdf-from-z} that
$$\E[U_m\mid I_m] = 
\int_0^1 1 - F_{m,[I_m]}(u)\d u = \int_0^1 1 - u - g_m(u) Z_m \d u = \frac12-\kappa_m\frac{I_m-\pi_m}{\pi_m}.$$
Therefore, by expanding and taking expectations, we have
$$\E\left[\prod_{m=1}^d U_m\right]=\E\left[\prod_{m=1}^d \left(\frac12-\kappa_m\frac{I_m-\pi_m}{\pi_m}\right)\right]=2^{-d}+\sum_{|S|\ge2}(-1)^{|S|}2^{-(d-|S|)}\theta_S\kappa_S;$$
the last equality holds because $\E[Z_m] = 0$ for each $m$, thus all first order terms have zero expectation. Finally, \eqref{eq:rho-d-plus-farlie} follows from $\int\Pi(\bm u)\d C(\bm u)=\E[\prod_m U_m]$.
\end{proof}

If we fix the kernels $(g_1,\dots,g_d)$, equivalently the $\pi_m$-calibrated pairs $(F_{m,[0]},F_{m,[1]})_{m=1}^d$, we can study the effect of changing the Bernoulli index vector $\bm I$ on the induced copula $C$ and the dependence parameters $\rho_d^\pm$. The relevant stochastic order here is the supermodular order $\preceq_{\rm sm}$ (see, for instance, \cite{muller2002comparison,shaked2007stochastic}). Let $\bm I$ and $\bm I'$ be Bernoulli vectors with the same margins $(\pi_1,\dots,\pi_d)$, and construct $(U_1,\dots,U_d)$ and $(U_1',\dots,U_d')$ as in Theorem \ref{thm:dvariate-bernoulli}, using the same kernels but different index vectors. The following theorem is adapted from \cite[Theorem 4.2]{blier-wong2022stochastic} and stated without proof. 

\begin{theorem}\label{thm:sm-order-fixed-kernels}
Assume that $g_m$ has no sign-changes on $(0,1)$ for each $m\in[d]$. 
Let $\bm U$ and $\bm U'$ be constructed as in Theorem \ref{thm:dvariate-bernoulli} 
using the same kernels but different Bernoulli index vectors 
$\bm I$ and $\bm I'$ with identical margins. 
If $\bm I\preceq_{\rm sm} \bm I'$, then $\bm U\preceq_{\rm sm} \bm U'$. 
Consequently, $\rho_d^+(C_{\bm I})\le \rho_d^+(C_{\bm I'})$ and 
$\rho_d^-(C_{\bm I})\le \rho_d^-(C_{\bm I'})$. 
Among all Bernoulli vectors with fixed margins, 
the comonotonic coupling $\bm I^+$ is maximal 
in $\preceq_{\rm sm}$; hence $C_{\bm I^+}$ is the 
upper supermodular bound for fixed kernels and maximizes $\rho_d^\pm$.
\end{theorem}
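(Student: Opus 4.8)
The plan is to establish the ordering $\bm U\preceq_{\rm sm}\bm U'$ by a conditioning-and-coupling argument, and then read off the two corollaries (monotonicity of $\rho_d^\pm$ and maximality of $\bm I^+$) from standard properties of $\preceq_{\rm sm}$. First I would replace the independent mixture components by a comonotone coupling of the two cdfs in each margin. Since, conditionally on $\bm I$, the coordinates in \eqref{eq:unif-mix} are independent with marginal law $F_{m,[I_m]}$, the joint law of $\bm U$ depends only on these conditional marginals and not on how $(U_{m,[0]},U_{m,[1]})$ are coupled within each margin. Introducing independent $W_1,\dots,W_d\sim\Unif(0,1)$, independent of $\bm I$ and $\bm I'$, we may therefore realize
$$\bm U\eqd\left(F_{1,[I_1]}^{-1}(W_1),\dots,F_{d,[I_d]}^{-1}(W_d)\right),\qquad \bm U'\eqd\left(F_{1,[I_1']}^{-1}(W_1),\dots,F_{d,[I_d']}^{-1}(W_d)\right),$$
using the \emph{same} auxiliary vector $\bm W$ for both. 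Writing $h_m(i,w):=F_{m,[i]}^{-1}(w)$, the key observation is that the no-sign-change hypothesis on $g_m=\pi_m(F_{m,[1]}-F_{m,[0]})$ forces $F_{m,[0]}$ and $F_{m,[1]}$ to be ordered in the usual stochastic order, so that $i\mapsto h_m(i,w)$ is monotone in $i$ for every fixed $w$ (increasing whenever $g_m\ge0$, which covers all kernels in the paper).

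Next I would run the closure property of $\preceq_{\rm sm}$ under coordinatewise monotone maps. Fix a supermodular $f$ and condition on $\bm W=\bm w$. The function $\bm i\mapsto f(h_1(i_1,w_1),\dots,h_d(i_d,w_d))$ is a composition of the supermodular $f$ with the coordinatewise increasing maps $h_m(\cdot,w_m)$, hence is itself supermodular on $\{0,1\}^d$. Therefore $\bm I\preceq_{\rm sm}\bm I'$ yields the conditional inequality $\E[f(h_1(I_1,w_1),\dots)]\le\E[f(h_1(I_1',w_1),\dots)]$ for each $\bm w$, and integrating over $\bm W$ (independent of $\bm I$ and $\bm I'$) gives $\E[f(\bm U)]\le\E[f(\bm U')]$, i.e. $\bm U\preceq_{\rm sm}\bm U'$. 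The main obstacle is precisely this composition step: it requires all coordinate maps $h_m(\cdot,w)$ to be monotone in a \emph{consistent} direction, because a supermodular $f$ composed with a mixture of increasing and decreasing coordinate maps need not be supermodular. This is exactly what the no-sign-change hypothesis secures; the case of all $g_m\le0$ is handled identically, since composing a supermodular function with coordinatewise decreasing maps again yields a supermodular function.

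Finally, the two consequences follow by testing against specific supermodular functionals. From \eqref{eq:rho-d-minus-farlie}--\eqref{eq:rho-d-plus-farlie}, or directly, $\rho_d^-$ is a positive affine function of $\int_{[0,1]^d}C\,\d\bm u=\E[\prod_{m}(1-U_m)]$ and $\rho_d^+$ is a positive affine function of $\int_{[0,1]^d}\Pi\,\d C=\E[\prod_m U_m]$; both $\bm u\mapsto\prod_m u_m$ and $\bm u\mapsto\prod_m(1-u_m)$ are supermodular on $[0,1]^d$, so $\bm U\preceq_{\rm sm}\bm U'$ immediately gives $\rho_d^\pm(C_{\bm I})\le\rho_d^\pm(C_{\bm I'})$. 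For maximality, I would invoke the classical fact that within a Fréchet class (binary margins fixed at $\pi_1,\dots,\pi_d$) the comonotone coupling $\bm I^+$ is the $\preceq_{\rm sm}$-largest element, equivalently the Fréchet--Hoeffding upper bound maximizes $\E[f]$ over all supermodular $f$. Applying the order-preservation just proved with $\bm I'=\bm I^+$ then shows that $C_{\bm I^+}$ is the supermodular upper bound for fixed kernels and, by the previous inequalities, maximizes both $\rho_d^+$ and $\rho_d^-$.
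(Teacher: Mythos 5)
Your argument is sound, and it is worth noting that the paper does not actually prove this statement: Theorem \ref{thm:sm-order-fixed-kernels} is stated without proof as an adaptation of a result in \cite{blier-wong2022stochastic}. Your proposal supplies exactly the proof one would expect that adaptation to rest on: realize both vectors with the same auxiliary uniforms $\bm W$ so that $U_m=F_{m,[I_m]}^{-1}(W_m)$ and $U_m'=F_{m,[I_m']}^{-1}(W_m)$ (legitimate, since only the conditional marginals given $\bm I$ matter), then invoke closure of $\preceq_{\rm sm}$ under coordinatewise monotone maps conditionally on $\bm W$, integrate out $\bm W$, and test against the supermodular functions $\prod_m u_m$ and $\prod_m(1-u_m)$ to get the $\rho_d^\pm$ inequalities and the Fr\'echet upper-bound statement. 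Two remarks. First, a harmless slip: if $g_m\ge 0$ then $F_{m,[1]}\ge F_{m,[0]}$ pointwise, so $i\mapsto F_{m,[i]}^{-1}(w)$ is \emph{decreasing}, not increasing; the composition step still goes through because supermodularity is preserved under coordinatewise maps that are all decreasing. Second, and more substantively, your own caveat about needing a \emph{consistent} direction across coordinates is not actually secured by the stated hypothesis: ``each $g_m$ has no sign-changes'' permits, say, $g_1\ge 0$ and $g_2\le 0$, in which case the coordinate maps $h_1(\cdot,w)$ and $h_2(\cdot,w)$ are monotone in opposite directions and the composition need not be supermodular. Indeed for $d=2$ with $g_1g_2\le 0$, increasing $\theta_{12}$ decreases $C$ pointwise, so the conclusion reverses. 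Your proof is therefore correct under the additional (implicit, and satisfied by every kernel in the paper) assumption that all $g_m$ share a common sign; you should state that strengthening explicitly rather than attribute it to the no-sign-change hypothesis.
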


The previous theorem assumes that the kernels $g_m$ 
have no sign-changes on $(0,1)$. 
This is satisfied by most standard choices of $\pi$-calibrated pairs, 
including the classical FGM kernel $g(u)=u(1-u)$, 
the HKI/HKII kernels, and the BKB kernels. 
We give an example of a kernel with sign-changes in Appendix \ref{sec:examples}.

Assume the exchangeable symmetric setting and fix a common kernel. 
Then there exist exchangeable vectors $\bm I^-$ and $\bm I^+$ such that, 
for all exchangeable $\bm I$, we have 
$\bm I^- \preceq_{\rm sm} \bm I \preceq_{\rm sm} \bm I^+$, 
and the extremal copulas are called the extreme negative dependence (END) 
and extreme positive dependence (EPD) copulas 
within that class of copulas. 
In the exchangeable Farlie setting with a common kernel integral 
$\kappa:=\int_0^1 g(u)\d u$, the closed forms in 
Proposition \ref{prop:rho-d-closed-form} simplify to
$$\rho_d^-(C)=\frac{d+1}{2^d-(d+1)}\sum_{k=2}^d \binom{d}{k}(2\kappa)^k \theta_k, \qquad \rho_d^+(C)=\frac{d+1}{2^d-(d+1)}\sum_{k=2}^d \binom{d}{k}(-1)^k(2\kappa)^k \theta_k.$$
For EPD and END one has $\theta_k=0$ for odd $k$, 
so $\rho_d^+(C)=\rho_d^-(C)$ in both cases.

For fixed $\bm I$, Proposition \ref{prop:rho-d-closed-form} shows that 
$\rho_d^\pm$ depends on the kernels only through the 
products $\kappa_S=\prod_{m\in S}\kappa_m$.
Thus, to maximize the magnitude of $\rho_d^\pm$, 
one should maximize $|\kappa_m|$ in each margin. 
In the symmetric Bernoulli-splitting construction ($\pi_m=1/2$), 
the sharp bound is $|\kappa_m|\le 1/4$, and equality is attained by 
the checkerboard kernels discussed in the proof of 
Theorem \ref{thm:global-extrema-rho}.

For $d=3$, Proposition \ref{prop:rho-d-closed-form} gives
$$\rho_3^-(C)=4(\theta_{12}\kappa_1\kappa_2+
\theta_{13}\kappa_1\kappa_3+\theta_{23}\kappa_2\kappa_3)+
8\theta_{123}\kappa_1\kappa_2\kappa_3;$$
$$\rho_3^+(C)=4(\theta_{12}\kappa_1\kappa_2+
\theta_{13}\kappa_1\kappa_3+\theta_{23}\kappa_2\kappa_3)-
8\theta_{123}\kappa_1\kappa_2\kappa_3.$$
In the exchangeable and symmetric case ($\kappa_1=\kappa_2=\kappa_3=\kappa$), 
END has $(\theta_2,\theta_3)=(-1/3,0)$ and 
EPD has $(\theta_2,\theta_3)=(1,0)$, so
$$\rho_3^{\pm}({\rm EPD})=12\kappa^2,\qquad \rho_3^{\pm}({\rm END})=-4\kappa^2.$$
For the classical FGM kernel $g(u)=u(1-u)$, 
one has $\kappa=\int_0^1 u(1-u) \d u=1/6$, 
hence $\rho_3^\pm\in[-1/9, 1/3]$.
With the checkerboard kernels one has $\kappa=1/4$, 
hence $\rho_3^\pm\in[-1/4, 3/4]$ in the exchangeable END--EPD range.

\section{Power transformations of Farlie copulas}\label{sec:power}

Motivated by the powered generalizations of the FGM family 
proposed in \cite{bekrizadeh2012new,bekrizadeh2022generalized}, 
we consider copulas obtained by taking a power of Farlie copulas. 
It is convenient to introduce the normalized kernels $(h_1,h_2)$ 
defined implicitly by
\begin{equation}\label{eq:h-def}
g_1(u_1)=u_1 h_1(u_1),\qquad g_2(u_2)=u_2 h_2(u_2),\qquad (u_1,u_2)\in[0,1]^2,
\end{equation}
(with the usual continuous extension at $0$ when needed). 
With this notation, \eqref{eq:farlie-2d} is equivalently the 
Farlie-type copula proposed in \cite{farlie1960performance}, given by
\begin{equation}\label{eq:farlie-h}
C(u_1,u_2)=u_1 u_2 \left(1+a h_1(u_1) h_2(u_2)\right).
\end{equation}
We refer the reader to Appendix \ref{app:comparison-three-classes} 
for the difference between Farlie and Sarmanov copulas. 
The powered family studied in \cite{bekrizadeh2012new,bekrizadeh2022generalized} is then
\begin{equation}\label{eq:powered}
C_{a,r}(u_1,u_2) =u_1 u_2 \left(1+a h_1(u_1) h_2(u_2)\right)^r,\qquad r \in \mathbb N.
\end{equation}
Direct verification that \eqref{eq:powered} is 
2-increasing is typically difficult. 
Our main observation in this section is that, 
when $r\in\mathbb{N}$, the powered copula \eqref{eq:powered} 
admits a simple block-maximum construction: 
it is the copula of componentwise maxima of an unpowered 
(that is, $r=1$) Farlie or Sarmanov copula with suitably transformed kernels. 
The main idea is to combine the Bernoulli-mixture representation of 
Theorem \ref{thm:stochastic} with a block-maximum construction that 
produces the desired power transform. 

\subsection{Block maxima and outer-power transformations}\label{subsec:power:blockmax}

For a copula $C$ and $r>1$, define the outer-power transform
\begin{equation}\label{eq:outer-power}
C^{(r)}(u_1,u_2):=C\left(u_1^{1/r},u_2^{1/r}\right)^{r},\qquad (u_1,u_2)\in[0,1]^2,
\end{equation}
which is a copula whenever $C$ on the right-hand side is a copula; 
see, for instance, \cite{nelsen2006introduction,liebscher2008construction}. 
The following proposition is well-known (see, for instance, 
\cite[Section 3.3.3]{nelsen2006introduction}), 
but we include a proof for completeness.
\begin{proposition}[Block maxima]\label{prop:block-maxima}
Let $(X_i,Y_i)_{i=1}^n$ be i.i.d. with continuous margins and copula $C$.
Define $M_X:=\max_{1\le i\le n}X_i$ and $M_Y:=\max_{1\le i\le n}Y_i$.
Then the copula of $(M_X,M_Y)$ equals $C^{(n)}$ as in \eqref{eq:outer-power}.
\end{proposition}

\begin{proof}
Let $F_X,F_Y$ be the margins of $(X_1,Y_1)$ and set 
$u_1=F_X(x)$, $u_2=F_Y(y)$. Then,
$$\p(M_X\le x,M_Y\le y) =\prod_{i=1}^n\p(X_i\le x,Y_i\le y) =C(u_1,u_2)^{n}.$$
The marginal probabilities satisfy $\p(M_X\le x)=u_1^{n}$ and 
$\p(M_Y\le y)=u_2^{n}$. Therefore,
$$\p(M_X\le x,M_Y\le y)=C(u_1,u_2)^{n} =C^{(n)}(u_1^n,u_2^n),$$
which proves the claim.
\end{proof}

The main observation in this section is that outer-power transforms 
are naturally connected to block maxima, 
which allows us to construct powered Farlie copulas 
through Bernoulli-mixture representations combined with the 
block-maxima representation. Fix kernels $h_1,h_2$ and $a\in\mathbb R$ 
and consider the Farlie copula
$$C_a(u_1,u_2)=u_1 u_2 (1+a h_1(u_1) h_2(u_2)).$$
For an integer $r\ge1$, define the transformed kernels 
$h_1^{\langle r\rangle}(x):=h_1(x^r)$ and $h_2^{\langle r\rangle}(y):=h_2(y^r),$ 
and the base copula
\begin{equation*}
C_a^{\langle r\rangle}(x,y):=xy(1+a h_1(x^r) h_2(y^r)).
\end{equation*}
Then its outer-power transform is the powered copula
\begin{equation}\label{eq:powered-bk}
C_{a,r}(u_1,u_2):=C_a^{\langle r\rangle}\left(u_1^{1/r},u_2^{1/r}\right)^{r}
=u_1 u_2 (1+a h_1(u_1) h_2(u_2))^{r}.
\end{equation}
Hence $C_{a,r}$ is the copula of maxima over $k=1,\dots,r$ 
taken separately in each component of $r$ i.i.d. samples 
from $C_a^{\langle r\rangle}$. 

We focus on integer powers $r\in\mathbb N$ in the following, 
as this case admits a straightforward stochastic representation 
through block maxima. 
However, the outer-power transform in \eqref{eq:outer-power} 
is well-defined for any real $r>1$. 
If the base copula is max-infinitely divisible 
(see, for instance, \cite{balkema1977max, resnick1987extreme, genest2018classa}), 
then the powered construction also extends to $0 < r < 1$. 

\subsection{Bernoulli-mixture representation for integer powers}\label{subsec:power:bernoulli}

We now show that the powered Farlie copulas in \eqref{eq:powered-bk} 
can be constructed through a Farlie representation combined with block maxima. 
This extends Theorem \ref{thm:dvariate-bernoulli} to the powered setting.
 
\begin{theorem}\label{thm:block-maxima}
Fix $r\in\mathbb{N}$. Let $(X_k,Y_k)_{k=1}^r$ be i.i.d. with copula
$C^{\langle r\rangle}_a(x,y)=xy(1+ah_1(x^r) h_2(y^r))$. Define
$$U_1:=\left(\max_{1\le k\le r}X_k\right)^r,\qquad 
  U_2:=\left(\max_{1\le k\le r}Y_k\right)^r.$$
Then $(U_1,U_2)$ has copula
$$C_{a,r}(u_1,u_2)=u_1 u_2 (1+a h_1(u_1) h_2(u_2))^r,\qquad (u_1,u_2)\in[0,1]^2.$$
\end{theorem}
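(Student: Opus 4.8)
The plan is to recognize this statement as the direct composition of two results already established in the excerpt: Proposition \ref{prop:block-maxima} (block maxima give outer-power transforms) and the algebraic identity \eqref{eq:powered-bk} relating the outer-power transform of $C_a^{\langle r\rangle}$ to the powered copula $C_{a,r}$. The theorem is essentially a ``glue'' statement, so the proof should be short. First I would record that by hypothesis $(X_k,Y_k)_{k=1}^r$ are i.i.d. with copula $C_a^{\langle r\rangle}$, which is a genuine copula (it is an unpowered Farlie copula with the transformed kernels $h_1(x^r)$ and $h_2(y^r)$, valid for admissible $a$). Applying Proposition \ref{prop:block-maxima} with $n=r$ to the componentwise maxima $M_X=\max_k X_k$ and $M_Y=\max_k Y_k$ yields that the copula of $(M_X,M_Y)$ is the outer-power transform $\bigl(C_a^{\langle r\rangle}\bigr)^{(r)}$ as in \eqref{eq:outer-power}.

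The second step is to identify $U_1=M_X^{\,r}$ and $U_2=M_Y^{\,r}$ and to argue that this monotone componentwise transformation does not change the copula. Since $x\mapsto x^r$ is a strictly increasing continuous bijection of $[0,1]$ onto itself, it is a margin-preserving transformation at the copula level: the copula is invariant under strictly increasing transformations of each coordinate (Sklar's theorem, invariance of copulas under monotone marginal transforms). Hence the copula of $(U_1,U_2)=(M_X^{\,r},M_Y^{\,r})$ equals the copula of $(M_X,M_Y)$, namely $\bigl(C_a^{\langle r\rangle}\bigr)^{(r)}$. Finally, the identity already spelled out in \eqref{eq:powered-bk} gives
$$\bigl(C_a^{\langle r\rangle}\bigr)^{(r)}(u_1,u_2)=C_a^{\langle r\rangle}\bigl(u_1^{1/r},u_2^{1/r}\bigr)^{r}=u_1 u_2\bigl(1+a h_1(u_1) h_2(u_2)\bigr)^r=C_{a,r}(u_1,u_2),$$
which is exactly the claimed copula.

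Alternatively, and perhaps more transparently, one can bypass the copula-invariance remark by computing the joint cdf of $(U_1,U_2)$ directly. Setting $u_1=\p(U_1\le u_1)$-consistent margins, one has $\p(U_1\le u_1,U_2\le u_2)=\p\bigl(M_X\le u_1^{1/r},M_Y\le u_2^{1/r}\bigr)=\prod_{k=1}^r\p\bigl(X_k\le u_1^{1/r},Y_k\le u_2^{1/r}\bigr)=C_a^{\langle r\rangle}\bigl(u_1^{1/r},u_2^{1/r}\bigr)^r$ by independence, and the marginal check $\p(U_1\le u_1)=\p(M_X\le u_1^{1/r})=\bigl(u_1^{1/r}\bigr)^r=u_1$ (and similarly for $U_2$) confirms standard uniform margins, so this joint cdf is itself the copula. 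Substituting the explicit form of $C_a^{\langle r\rangle}$ then collapses the expression to $C_{a,r}$ via \eqref{eq:powered-bk}.

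The step I expect to require the most care is the marginal bookkeeping: verifying that $U_1$ and $U_2$ have standard uniform margins (so that the computed joint cdf is legitimately the copula) and correctly propagating the $1/r$ exponents through the composition $h_j(x^r)$ evaluated at $x=u_j^{1/r}$, since $h_j\bigl((u_j^{1/r})^r\bigr)=h_j(u_j)$ is precisely what produces the unpowered kernel inside the $r$-th power. This is routine but is the only place where an indexing slip could occur; everything else is a direct invocation of the two preceding results.
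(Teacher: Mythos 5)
Your proposal is correct, and your ``alternative'' direct computation of the joint cdf of $(U_1,U_2)$ (checking uniform margins via $\p(M_X\le u^{1/r})=u$ and factoring the joint probability over the $r$ i.i.d.\ pairs) is essentially verbatim the paper's own proof; the primary route via Proposition \ref{prop:block-maxima} plus copula invariance under the increasing map $x\mapsto x^r$ is just a modular repackaging of the same argument. The only point worth making explicit, as the paper does, is that the pairs $(X_k,Y_k)$ are taken with \emph{uniform} margins, since the copula alone does not pin down $\p(X_k\le x, Y_k\le y)$.
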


\begin{proof}
It suffices to find the copula of $(U_1, U_2)$. 
Let $(X,Y)$ have uniform margins and copula
$$C^{\langle r\rangle}_a(x,y)=xy(1+a h_1(x^r) h_2(y^r)).$$
Take i.i.d. copies $(X_k,Y_k)_{k=1}^r$ and set 
$M_X=\max_{1\le k\le r}X_k$, $M_Y=\max_{1\le k\le r}Y_k$, 
and let $U_1=M_X^r$ and $U_2=M_Y^r$. 
Then for $u\in[0,1]$,
$$\p(U_1\le u)=\p(M_X^r\le u)=\p(M_X\le u^{1/r})=\p(X\le u^{1/r})^r=u,$$
so $U_1\sim\Unif(0,1)$ (and similarly $U_2\sim\Unif(0,1)$). 
For $(u_1,u_2)\in[0,1]^2$,
\begin{align*}
\p(U_1\le u_1,U_2\le u_2) &=\p(M_X\le u_1^{1/r}, M_Y\le u_2^{1/r}) 
                           =\p(X\le u_1^{1/r}, Y\le u_2^{1/r})^r \\
                          &=(C^{\langle r\rangle}_a(u_1^{1/r},u_2^{1/r}))^r =\left(u_1^{1/r}u_2^{1/r}(1+a h_1(u_1) h_2(u_2))\right)^r \\
                          &=u_1 u_2 (1+a h_1(u_1) h_2(u_2))^r =C_{a,r}(u_1,u_2),
\end{align*}
as claimed.
\end{proof}

\begin{remark}\label{rem:a-range-transformed-kernel}
The admissible range of the dependence parameter for the powered copula 
$C_{a,r}(u_1,u_2)=u_1 u_2 (1+a h_1(u_1) h_2(u_2))^r$ is not the same 
as the admissible range for the base copula 
$C_{a,1}(u_1, u_2)=u_1 u_2 (1+ah_1(u_1) h_2(u_2))$. 
Theorem \ref{thm:block-maxima} provides a sufficient condition, 
namely, that if the auxiliary copula 
$C^{\langle r\rangle}_a(u_1, u_2) = u_1u_2 (1 + a h_1(u_1^r) h_2(u_2^r))$ 
is a copula, then the powered copula $C_{a,r}$ is also a copula. 
Thus the admissible range obtained from the transformed kernels 
$h_1(x^r)$ and $h_2(y^r)$ yields a sufficient condition for validity, 
and the inclusion can be strict.

For example, if $h_1(x)=1-x$ and $h_2(u)=1-u$ then 
$g_1(x)=x(1-x^r)$ and $\phi_1(x)=1-(r+1)x^r$, 
so the lower slope bound is $-r$, yielding an admissible 
range for $a$ that changes with $r$.
\end{remark}

\section{Conclusion}\label{sec:conclusion}

Sarmanov copulas are an attractive family of copulas because 
they remain analytically tractable, accommodate asymmetry 
through distinct kernels in each margin, and often lead to 
closed-form expressions for standard measures of association. 
Their practical use, however, is hindered by admissibility: 
even in the bivariate case, validity requires a nonnegativity 
constraint on the copula density, while in dimension $d$, 
the $d$-increasing property becomes combinatorial 
and quickly unmanageable for most kernels. 
The main contribution of this paper is to reformulate
this admissibility problem as a stochastic construction problem. 
By representing bivariate Sarmanov copulas as Bernoulli-indexed mixtures 
of simple one-dimensional kernels, we obtain copulas 
that are valid by construction, with feasibility encoded 
throughby the existence of an underlying Bernoulli probability mass function. 
The dependence parameter is carried entirely by a latent Bernoulli pair, 
while each margin is built from two cdfs whose mixture leads to a uniform distribution. 
This dependence parameter is proportional to the covariance of the Bernoulli pair, 
and the admissible range is the Fréchet--Hoeffding bounds for that covariance. 
This provides interpretive insight into the nature of dependence 
in Sarmanov copulas and a simple criterion for assessing admissibility. 
This also yields a useful framework to build and simulate 
asymmetric Sarmanov copulas and recovers, within a common framework, 
several classical Huang--Kotz and Bairamov--Kotz--Bekçi 
specifications together with their admissible parameter ranges.

The same ideas extend naturally to higher dimensions, 
and we show that the exact stochastic representation in $d$ dimensions 
yields a copula with the familiar FGM subset expansion, 
where all interaction coefficients are normalized centred mixed moments 
of the latent Bernoulli random vector. 
This distinguishes marginal design (via univariate kernel factors) 
from dependence design (via the Bernoulli distribution) and bypasses 
direct verification of the $d$-increasing property. 
In three dimensions, the resulting formulation separates the pairwise
pairwise from three-way interactions and leads to closed-form 
expressions for orthant-based extensions of Spearman's rho.

Finally, we show that  ``powered'' Sarmanov models 
of the form $u_1u_2(1+ah_1(u_1)h_2(u_2))^r$, where $r$ is an integer, 
have a stochastic representation in terms of block maxima of 
unpowered Sarmanov copulas.

Several extensions are natural next steps for this work. 
Methodologically, the latent-Bernoulli representation suggests 
estimation and testing procedures that exploit conditional independence 
given the Bernoulli indices (e.g., likelihood-based or EM-type strategies) 
and motivates structured Bernoulli models (exchangeable, factor, sparse, 
or partially exchangeable) to control parameter growth in high dimensions. 
Theoretically, it would be valuable to extend the representation 
beyond single perturbations and to characterize which base constructions 
admit fractional block-maximum powers with max-infinite divisibility.
From an applied perspective, future work could use these copulas 
for risk aggregation, stress testing, and scenario generation 
in finance and insurance. 
Current applications focus mainly on bivariate models,
but the $d$-variate constructions here may facilitate higher-dimensional applications.

\section*{Acknowledgements}

The author acknowledges financial support from the Natural Sciences and Engineering Research Council of Canada (RGPIN-2025-06879).

\appendix

\section{Omitted proofs}\label{sec:proofs}

\begin{proof}[Proof of Theorem \ref{thm:stochastic}]
We first verify that the auxiliary functions 
$F_{1,[0]}, F_{1,[1]}, F_{2,[0]}, F_{2,[1]}$ are valid cdfs on $[0,1]$. 
We then show that the stochastic representation yields uniform marginals 
and the claimed Sarmanov copula. 
Note that $g_1(0)=g_1(1)=0$ implies
$F_{1,[i]}(0)=0$ and $F_{1,[i]}(1)=1$ for $i\in\{0,1\}$. 
Since $g_1$ is absolutely continuous,
$$F'_{1,[0]}(u)=1-\Lambda_1 \phi_1(u)\ge 0 \quad\text{a.e.},
\qquad F'_{1,[1]}(u)=1-\lambda_1 \phi_1(u)\ge 0 \quad\text{a.e.},$$
using $\phi_1\le 1/\Lambda_1$ a.e. and $\phi_1\ge 1/\lambda_1$ a.e. 
with $\lambda_1<0$ (so $\lambda_1 \phi_1\le 1$ a.e.). 
Hence $F_{1,[0]}$ and $F_{1,[1]}$ are nondecreasing and take values in $[0,1]$. 
The same argument applies to $F_{2,[0]}$ and $F_{2,[1]}$. 
We next verify that the stochastic representation leads to uniform marginals. 
Conditioning on $I_1$ gives
$$\p(U_1\le u)=(1-\pi_1)F_{1,[0]}(u)+\pi_1 F_{1,[1]}(u) =u-\left(\Lambda_1(1-\pi_1)+\lambda_1\pi_1\right)g_1(u).$$
Since $\pi_1=\Lambda_1/(\Lambda_1 - \lambda_1)$, 
we have $(1-\pi_1)\Lambda_1+\pi_1\lambda_1=0$, 
hence $\p(U_1\le u)=u$ for all $u\in[0,1]$ and $U_1\sim\mathrm{Unif}(0,1)$. 
The same holds for $U_2\sim\mathrm{Unif}(0,1)$. 
Finally, for $u_1,u_2\in[0,1]$, conditioning on $(I_1,I_2)$ 
and using conditional independence of $U$ and $V$ given $(I_1, I_2)$, 
the copula is
$$C(u_1, u_2)=\p(U_1\le u_1,U_2\le u_2)=
   \E\left[F_{1,[I_1]}(u_1)F_{2,[I_2]}(u_2)\right],$$
where
$$F_{1,[I_1]}(u)=u-g_1(u)\left((1-I_1)\Lambda_1+I_1\lambda_1\right),
\qquad F_{2,[I_2]}(u)=u-g_2(u)\left((1-I_2)\Lambda_2+I_2\lambda_2\right).$$
Expanding the product and regrouping terms yields
$$C(u_1,u_2)=u_1 u_2-u_1 g_2(u_2)\E\left[(1-I_2)\Lambda_2+I_2\lambda_2\right] -u_2 g_1(u_1)\E\left[(1-I_1)\Lambda_1+I_1\lambda_1\right] +g_1(u_1)g_2(u_2) \Xi,$$
with $\Xi:=\E\left[\left((1-I_1)\Lambda_1 + I_1\lambda_1\right) \left((1-I_2)\Lambda_2 + I_2\lambda_2\right)\right]$. 
The two linear terms vanish since 
$$\E\left[(1 - I_1) \Lambda_1 + I_1 \lambda_1\right]=
   (1 - \pi_1) \Lambda_1 + \pi_1 \lambda_1=0$$
and likewise for $I_2$. 
We also have
$$(1-I_1)\Lambda_1+I_1\lambda_1=
\left(\lambda_1-\Lambda_1\right)(I_1-\pi_1),\qquad
(1-I_2)\Lambda_2+I_2\lambda_2=\left(\lambda_2-\Lambda_2\right)(I_2-\pi_2).$$
Using $\left(\lambda_k-\Lambda_k\right)\pi_k=-\Lambda_k$ for $k=1,2$, 
we obtain $\Xi=\Lambda_1\Lambda_2\theta$. 
Therefore,
$$C(u_1,u_2)=u_1 u_2+\Lambda_1\Lambda_2\theta g_1(u_1)g_2(u_2),$$
which is the desired Sarmanov form with $a=\Lambda_1\Lambda_2\theta$.   
\end{proof}

\begin{proof}[Proof of Theorem \ref{thm:global-extrema-rho}]
Recall that $\rho_S(C)=12aG_1G_2$ with $G_k:=\int_0^1 g_k(u)\d u$. 
We bound $|G_1|$; the same argument applies to $|G_2|$. 
From $g_1(0)=0$ and $\phi_1\le 1/\Lambda_1$ a.e., 
$g_1(u)=\int_0^u\phi_1\le u/\Lambda_1$. 
From $g_1(1)=0$ and $\phi_1\ge 1/\lambda_1$ a.e., 
$g_1(u)=-\int_u^1\phi_1\le -(1/\lambda_1)(1-u)=|1/\lambda_1|(1-u)$. 
These envelopes intersect at $u_0:=\Lambda_1/(\Lambda_1+|\lambda_1|)$, hence
$$G_1\le \int_0^{u_0}\frac{u}{\Lambda_1}\d u+\int_{u_0}^1\left| \frac{1}{\lambda_1} \right|(1-u)\d u =\frac{1}{2(\Lambda_1+|\lambda_1|)}.$$
Applying the same bound to $-g_1$ gives $|G_1|\le 1/(2(\Lambda_1+|\lambda_1|))$. 
Similarly,
$$|G_2|\le \frac{1}{2(\Lambda_2+|\lambda_2|)}.$$

We now find the admissible range of $a$. 
The density is $c(u_1,u_2)=1+a\phi_1(u_1)\phi_2(u_2)\ge 0$ a.e. 
Since $\phi_k\in[1/\lambda_k,1/\Lambda_k]$ a.e., 
the extreme values of $\phi_1\phi_2$ occur at the corners. 
Thus, if $a\ge 0$ then $1+a/(\Lambda_1\lambda_2)\ge 0$ and 
$1+a/(\lambda_1\Lambda_2)\ge 0$, so
$a\le \min\left(\Lambda_1|\lambda_2|,|\lambda_1|\Lambda_2\right)$. 
If $a\le 0$, writing $a=-|a|$, then $1-|a|/(\Lambda_1\Lambda_2)\ge 0$ 
and $1-|a|/(\lambda_1\lambda_2)\ge 0$, 
so $|a|\le \min\left(\Lambda_1\Lambda_2,|\lambda_1\lambda_2|\right).$

Using $|\rho_S|=12|a||G_1||G_2|$ and the bounds on $|G_1|,|G_2|$, 
we get
$$|\rho_S| \le \frac{3|a|}{(\Lambda_1+|\lambda_1|)(\Lambda_2+|\lambda_2|)}.$$
Set $x=|\lambda_1|$, $y=\Lambda_1$, $s=|\lambda_2|$, 
and $t=\Lambda_2$ so that $x, y, s$ and $t$ are positive. 
If $a\ge 0$, then $|a|\le \min(ys,xt)$ and 
$$|\rho_S|\le \frac{3\min(ys,xt)}{(x+y)(s+t)}.$$
Since $(x+y)(s+t)=xs+yt+xt+ys\ge xt+ys+2\sqrt{xsyt}=(\sqrt{xt}+\sqrt{ys})^2\ge 4\min(xt,ys)$, we obtain $|\rho_S|\le 3/4$. 
If $a\le 0$, then $|a|\le \min(yt,xs)$ and hence
$$|\rho_S|\le \frac{3\min(xs,yt)}{(x+y)(s+t)}.$$
Since $(x+y)(s+t)=xs+yt+xt+ys\ge xs+yt+2\sqrt{xtys}=
(\sqrt{xs}+\sqrt{yt})^2\ge 4\min(xs,yt)$, we again get $|\rho_S|\le 3/4$.

Finally, the bound is sharp. 
Let $g_1^*(u)=g_2^*(u)=\min(u,1-u)$ and 
$\phi_k^*(u)=\id{u\in[0,1/2)}-\id{u\in(1/2,1]}$ a.e. 
Then $\Lambda_1=\Lambda_2=1$, $\lambda_1=\lambda_2=-1$, 
and $G_1^*=G_2^*=\int_0^1\min(u,1-u)\,\d u=1/4$. 
For $a=\pm 1$, the copula $C_\pm(u_1,u_2)=u_1u_2\pm g_1^*(u_1)g_2^*(u_2)$ 
has density $1\pm \phi_1^*(u_1)\phi_2^*(u_2)\in\{0,2\}$ a.e., 
hence is valid, and $\rho_S(C_\pm)=12(\pm 1)(1/4)^2=\pm 3/4$.
\end{proof}

\section{Example kernels}\label{sec:examples}

We collect in Table \ref{tab:kernels-kappa-Lambda-lambda} several known 
Sarmanov kernels from the literature, along with 
their corresponding values of $\kappa=\int_0^1 g(u) \d u$, 
the parameters $\Lambda$ and $\lambda$, and references. 
We also introduce several additional kernels that satisfy 
the regularity conditions of Theorem \ref{thm:global-extrema-rho}. 
The table uses the notation $B(\cdot,\cdot)$ as the Beta function 
and $\Phi$ as the standard normal cdf. 
Some of these kernels follow the strategy of \cite{fischer2007constructing}, 
who designed kernels by taking the copula generator as 
$u\mapsto f(F^{-1}(u))$ associated with a suitable univariate distribution.

\newcolumntype{K}{>{$\displaystyle}X<{$}}   
\newcolumntype{M}{>{$\displaystyle}c<{$}}   

\newcommand{\gHS}{\ensuremath{\sin(\pi u)/\pi}}
\newcommand{\gCau}{\ensuremath{\sin^{2}(\pi u)/\pi}}
\newcommand{\gLap}{\ensuremath{\begin{cases}
u,&0<u\le \frac12,\\
1-u,&\frac12<u\le 1,
\end{cases}}}
\newcommand{\gEighteen}{\ensuremath{\begin{cases}
1,&0<u\le \frac14,\\
\frac{1/4}{u},&\frac14<u\le \frac34,\\
\frac{1-u}{u},&\frac34<u\le 1,
\end{cases}}}
\newcommand{\gNineteen}{\ensuremath{\begin{cases}
2,&0<u\le \frac13,\\
\frac{1-u}{u},&\frac13<u\le 1,
\end{cases}}}
\newcommand{\gTwenty}{\ensuremath{\begin{cases}
1,&0<u\le \frac23,\\
\frac{2(1-u)}{u},&\frac23<u\le 1,
\end{cases}}}
\newcommand{\gTwentyOne}{\ensuremath{\begin{cases}
3u,&0<u\le \frac14,\\
1-u,&\frac14<u\le 1,
\end{cases}}}

\begin{table}[ht]
\centering
\footnotesize
\setlength{\tabcolsep}{5pt}
\renewcommand{\arraystretch}{2.2}
\addtolength{\aboverulesep}{1pt}
\addtolength{\belowrulesep}{1pt}

\begin{tabularx}{\linewidth}{@{}>{\hsize=0.15\hsize}r >{\hsize=.25\hsize}K M M M >{\hsize=1\hsize}>{\raggedright\arraybackslash}X@{}} \toprule
\# & g(u)                                       & \kappa=\int_0^1 g(u)\d u          & \Lambda & \lambda                                 & \text{Reference} \\ \midrule
1  & u(1-u)                                     & 1/6                               & 1       & -1                                      & \cite{morgenstern1956einfache}, \cite{gumbel1960bivariate} \\
2  & u(1-u^{p})\ \ (p>0)                        & \frac{p}{2(p+2)}                  & 1       & -1/p                                    & \cite{huang1999modifications} \\
3  & u(1-u)^{q}\ \ (q>1)                        & \frac{1}{(q+1)(q+2)}              & 1       & -\left(\frac{q+1}{q-1}\right)^{q-1}     & \cite{huang1999modifications} \\
4  & u(1-u^{p})^{q}\ \ (p>0,q>1)                & \frac1p B\left(\frac{2}{p},q+1\right)         & 1       & -\frac{(1+pq)^{q-1}}{p^{q}(q-1)^{q-1}}  & \cite{bairamov2001new} \\
5  & \gHS                                       & \frac{2}{\pi^{2}}                 & 1       & -1                                      & \cite{fischer2007constructing} \\
6  & \gCau                                      & \frac{1}{2\pi}                    & 1       & -1                                      & \cite{fischer2007constructing} \\
7  & \gLap                                      & 1/4                               & 1       & -1                                      & \cite{fischer2007constructing} \\
8  & u^{b}(1-u)^{a}\ \ (a>1,\ b>1)              & B(b+1,a+1)                        & \eqref{eq:lai-xie-Lambda} &  \eqref{eq:lai-xie-lambda}     & \cite{lai2000new} \\
9  & \dfrac{u(u-1)}{2}                          & -\dfrac{1}{12}                    & 2       & -2                                      & \cite{tinglee1996properties} \\
10 & \dfrac{u^{k+1}-u}{k+1}, \,k\in\mathbb{N}   & -\dfrac{k}{2(k+1)(k+2)}           & \dfrac{k+1}{k}  & -(k+1)                          & \cite{tinglee1996properties} \\
11 & (1-e^{-u})-(1-e^{-1})u                     & \dfrac{3-e}{2e}                   & e       & -\dfrac{e}{e-2}                         & \cite{tinglee1996properties} \\
12 &  \eqref{eq:norm-lee}  & \frac{e^{2/3}}{\sqrt{3}}\left(\Phi\left(\frac{1}{\sqrt{3}}\right)-\frac{1}{2}\right)   & \frac{1}{e-\frac{e^{2/3}}{\sqrt{3}}}    & -\sqrt{3}e^{-2/3}    & \cite{tinglee1996properties} \\
13 & u\frac{e^{2(1-u)}-1}{e^{2}-1}              & \frac{e^{2}-5}{4(e^{2}-1)}        & 1       & -\frac{e^{2}-1}{2}                      & --- \\
14 & u\frac{1-u}{1+u}                           & \frac{3}{2}-\log 4                & 1       & -2                                      & --- \\
15 & u(1-u)(1-2u)                               & 0                                 & 1       & -2                                      & --- \\
16 & (1-u)\sin(\pi u)/\pi                       & \frac{1}{\pi^{2}}                 & 1       & \approx -2.25 \ \ \eqref{eq:sin}                         & --- \\
17 & u\frac{1-u}{1+u^{2}}                       & -1+\frac{\log 2}{2}+\frac{\pi}{4} & 1      & -2                                      & --- \\
18 & u\frac{(1-u)^{2}}{1+u}                      & \frac{17}{6}-\log 16              & 1       & \frac{1}{3\sqrt[3]{4}-5}                & --- \\
19 & u(1-u)e^{-u}                               & -1+\frac{3}{e}                    & 1       & -e                                      & --- \\
20 & \gTwentyOne                                & 3/8                               & 1/3     & -1                                      & --- \\
\bottomrule
\end{tabularx}

\caption{Table of selected kernels $g(u)$ with their associated $\kappa$, $\Lambda$, and $\lambda$ values.}\label{tab:kernels-kappa-Lambda-lambda}
\end{table}
\FloatBarrier

The following expressions are used in Table \ref{tab:kernels-kappa-Lambda-lambda}:
\begin{equation}\label{eq:lai-xie-Lambda}
\Lambda = \frac{(a+b)^{a+b-2}\sqrt{a+b-1}} {\sqrt{ab} \left(b-\frac{\sqrt{ab}}{\sqrt{a+b-1}}\right)^{b-1}\left(a+\frac{\sqrt{ab}}{\sqrt{a+b-1}}\right)^{a-1}};
\end{equation}
\begin{equation}\label{eq:lai-xie-lambda}
\lambda = -\frac{(a+b)^{a+b-2}\sqrt{a+b-1}} {\sqrt{ab} \left(b+\frac{\sqrt{ab}}{\sqrt{a+b-1}}\right)^{b-1}\left(a-\frac{\sqrt{ab}}{\sqrt{a+b-1}}\right)^{a-1}};
\end{equation}
\begin{equation}\label{eq:norm-lee}
   \frac{e^{2/3}}{\sqrt{3}}\left(\Phi\left(\sqrt{3}\Phi^{-1}(u)+\frac{2}{\sqrt{3}}\right)-u\right);
\end{equation}
\begin{equation}\label{eq:sin}
   \lambda =- \pi \frac{\sqrt{y^{2}+4}}{y^{2}+2},
\end{equation}
where $y\in(0,\pi/2)$ is the solution for $y$ in $y\tan y=2$. Numerically, \eqref{eq:sin} gives $\lambda \approx -2.2585010314.$

One particularly interesting kernel to highlight is kernel \#15 
in Table \ref{tab:kernels-kappa-Lambda-lambda}, 
defined by $g(u)=u(1-u)(1-2u)$. 
This is also the kernel $\Phi_2$ from \cite{hamadou2025extension}, 
who motivate this kernel as the primitive of the second 
normalized shifted Legendre polynomial, 
providing an orthogonal extension of the classical FGM term.

This kernel has zero signed area, that is, $\kappa=0$, 
and therefore induces a Sarmanov copula with 
Spearman's $\rho_S$ and Kendall's $\tau$ equal to zero for all admissible $a$. 
Nevertheless, the copula is not the independence copula unless $a=0$. 
The resulting copula is 
$$C_a(u_1,u_2)=u_1u_2 +au_1(1-u_1)(1-2u_1) u_2(1-u_2)(1-2u_2), 
\qquad (u_1,u_2)\in[0,1]^2$$
for $a\in [-1,2].$ 
Also note that $g(u)$ changes sign on $(0,1)$, 
being positive on $(0,1/2)$ and negative on $(1/2,1)$. 
This implies that Theorem \ref{thm:sm-order-fixed-kernels} 
does not hold for this kernel, since the associated distributions 
$F_{1,[0]}$ and $F_{1,[1]}$ are not comparable in the stochastic order.

\section{Comparison of Farlie and Sarmanov copulas}\label{app:comparison-three-classes}

We compare three classical bivariate dependence constructions 
that are often presented under different names. 
The first is the Farlie-type copula specification from \cite{farlie1960performance}:
$$C_\alpha(u_1,u_2)=u_1u_2(1+\alpha h_1(u_1)h_2(u_2)),\qquad (u_1,u_2)\in[0,1]^2,$$
parametrized by a scalar $\alpha$ and fixed kernels $h_1, h_2$. 
The second is the additive copula specification, 
proposed by \cite{rodriguez-lallena2004new}, defined as
$$C(u_1,u_2)=u_1u_2+g_1(u_1)g_2(u_2),\qquad (u_1,u_2)\in[0,1]^2,$$
parametrized by functions $g_1,g_2$ subject to copula admissibility conditions. 
The third is the Sarmanov construction 
\cite{sarmanov1966generalized, tinglee1996properties}, 
defined by its density
$$f(x_1,x_2)=f_1(x_1)f_2(x_2)(1+\alpha_{12}\phi_1(x_1)\phi_2(x_2)),$$
with bounded kernels $\phi_1,\phi_2$ satisfying mean-zero 
and positivity constraints. 
When the marginals are continuous, this induces a copula through 
the probability integral transform.

These three constructions are closely related. 
Under absolute continuity, they all lead to copula densities of the form
$$c(u_1,u_2)=1+\gamma \psi(u_1)\eta(u_2),$$
for a scalar $\gamma$ and suitable univariate factors $\psi,\eta$. 
The differences between the families are therefore not 
in the density-level factorization itself, 
but in (i) the level at which the factorization is imposed 
(copula level versus joint density level), and 
(ii) the boundary and regularity restrictions required of the factor functions.

For the Farlie copulas, assume $h_1$ and $h_2$ are absolutely continuous and 
that $g_1(u)=uh_1(u)$ and $g_2(u)=uh_2(u)$ have essentially bounded derivatives. 
Then the copula is absolutely continuous with density
$$c_\alpha(u_1,u_2)=1+\alpha \phi_{1}(u_1)\phi_{2}(u_2),\qquad \phi_{1}(u)=(uh_1(u))', \quad \phi_{2}(u)=(uh_2(u))'.$$
Hence, the density-level factors must be derivatives 
of $u_1 h_1(u_1)$ and $u_2 h_2(u_2)$.

For the additive specification, if $g_1$ and $g_2$ are 
absolutely continuous, then $C$ is absolutely continuous and
$$c(u_1,u_2)=1+g_1'(u_1)g_2'(u_2)\qquad\text{for a.e. }(u_1,u_2)\in(0,1)^2.$$
Every Farlie-type copula is of the additive form through the identification
$$g_1(u)= \alpha uh_1(u),\qquad g_2(u)=uh_2(u).$$
Thus, the multiplicative Farlie parametrization 
is a special case of the additive parametrization.

A partial converse holds, but it depends on boundary regularity at the origin. 
Fix $\alpha\neq 0$ and consider $C(u_1,u_2)=u_1u_2+g_1(u_1)g_2(u_2)$. 
For $u_1,u_2>0$ one may rewrite $u_1u_2+g_1(u_1)g_2(u_2)$ as 
$u_1u_2(1+\alpha h_1(u_1)h_2(u_2))$, with
$h_1(u_1)= g_1(u_1)/(\alpha u_1)$ and $h_2(u_2)=g_2(u_2)/u_2$. 
To obtain kernels on $[0,1]$ compatible with the Farlie-type framework, 
the possible indeterminate forms at $u=0$ and $v=0$ must be excluded. 
One can, if the limits exist, define
$$h_1(0):=\lim_{u\downarrow 0}\frac{g_1(u)}{u\alpha},\qquad 
h_2(0):=\lim_{u\downarrow 0}\frac{g_2(u)}{u},$$
if the resulting extensions are bounded and satisfy 
the regularity conditions imposed on Farlie kernels. 
When these limits fail to exist or when division by $u$ 
destroys absolute continuity, an additive copula may remain admissible 
but fall outside the Farlie-type class as defined here, 
but the stochastic representation of Theorem \ref{thm:stochastic} still applies. 
Indeed, we have put conditions on $uh_1(u)$ and $uh_2(u)$ 
rather than on $h_1$ and $h_2$ directly, which allows for 
more general boundary behaviour at the origin, 
and coincides with the additive class of \cite{rodriguez-lallena2004new}.

For the Sarmanov construction, assume continuous marginals 
and define $U_1=F_1(X_1)$ and $U_2=F_2(X_2)$. 
Then the induced copula density is
$$c(u_1,u_2)=1+\alpha\tilde\phi(u_1)\tilde\psi(u_2),\qquad 
\tilde\phi(u)=\phi_1(F_1^{-1}(u)),\quad \tilde\psi(v)=\phi_2(F_2^{-1}(v)),$$
with $\int_0^1\tilde\phi(u)\d u=\int_0^1\tilde\psi(v)\d v=0$ 
by the mean-zero constraints. 
Integrating yields the additive copula representation
$$C(u_1,u_2)=u_1u_2+\alpha\Phi(u_1)\Psi(u_2),\qquad 
\Phi(u)=\int_0^u \tilde\phi(s)\d s,\quad \Psi(u)=\int_0^u \tilde\psi(t)\d t,$$
so, under absolute continuity, Sarmanov copulas coincide 
with the additive class of \cite{rodriguez-lallena2004new}, 
up to an equivalent reparameterization.

Therefore, under absolute continuity, 
the three families share the exact factorization as the Sarmanov copulas, 
and Theorem \ref{thm:stochastic} applies to all three constructions, 
provided the boundary and regularity conditions are satisfied. 
The main differences lie in the interpretation of the factor functions, 
the level at which the factorization is imposed, 
and the boundary conditions required for admissibility.


\begin{thebibliography}{}

\bibitem[Bairamov et~al., 2001]{bairamov2001new}
Bairamov, I., Kotz, S., and Bek{\c c}i, M. (2001).
\newblock New generalized {{Farlie-Gumbel-Morgenstern}} distributions and concomitants of order statistics.
\newblock {\em Journal of Applied Statistics}, 28(5):521--536.

\bibitem[Balkema and Resnick, 1977]{balkema1977max}
Balkema, A.~A. and Resnick, S.~I. (1977).
\newblock Max-infinite divisibility.
\newblock {\em Journal of Applied Probability}, 14(2):309--319.

\bibitem[Bekrizadeh, 2022]{bekrizadeh2022generalized}
Bekrizadeh, H. (2022).
\newblock Generalized {{FGM}} copulas: {{Properties}} and applications.
\newblock {\em Communications in Statistics - Simulation and Computation}, pages 1--12.

\bibitem[Bekrizadeh et~al., 2012]{bekrizadeh2012new}
Bekrizadeh, H., Parham, G.~A., and Zadkarmi, M.~R. (2012).
\newblock The new generalization of {Farlie--Gumbel--Morgenstern} copulas.
\newblock {\em Applied Mathematical Sciences}, 6(71).

\bibitem[{Blier-Wong} et~al., 2024a]{blier-wong2024new}
{Blier-Wong}, C., Cossette, H., Legros, S., and Marceau, E. (2024a).
\newblock A new method to construct high-dimensional copulas with {{Bernoulli}} and {{Coxian-2}} distributions.
\newblock {\em Journal of Multivariate Analysis}, 201:105261.

\bibitem[{Blier-Wong} et~al., 2022]{blier-wong2022stochastic}
{Blier-Wong}, C., Cossette, H., and Marceau, E. (2022).
\newblock Stochastic representation of {{FGM}} copulas using multivariate {{Bernoulli}} random variables.
\newblock {\em Computational Statistics \& Data Analysis}, 173.

\bibitem[{Blier-Wong} et~al., 2024b]{blier-wong2024exchangeable}
{Blier-Wong}, C., Cossette, H., and Marceau, E. (2024b).
\newblock Exchangeable {{FGM}} copulas.
\newblock {\em Advances in Applied Probability}, 56(1):205--234.

\bibitem[Cherubini et~al., 2004]{cherubini2004copula}
Cherubini, U., Luciano, E., and Vecchiato, W. (2004).
\newblock {\em Copula {{Methods}} in {{Finance}}}.
\newblock Wiley Finance Series. John Wiley \& Sons, Hoboken, NJ.

\bibitem[Cossette et~al., 2025]{cossette2025generalized}
Cossette, H., Marceau, E., Mutti, A., and Semeraro, P. (2025).
\newblock Generalized {{FGM}} dependence: Geometrical representation and convex bounds on sums.
\newblock {\em Statistical Papers}, 66(7):149.

\bibitem[Denuit et~al., 2006]{denuit2006actuarial}
Denuit, M., Dhaene, J., Goovaerts, M., and Kaas, R. (2006).
\newblock {\em Actuarial Theory for Dependent Risks}.
\newblock John Wiley \& Sons, Chichester, West Sussex.

\bibitem[Durante and Sempi, 2015]{durante2015principles}
Durante, F. and Sempi, C. (2015).
\newblock {\em Principles of {{Copula Theory}}}.
\newblock {Chapman and Hall/CRC}, Boca Raton, FL.

\bibitem[Eyraud, 1936]{eyraud1936principes}
Eyraud, H. (1936).
\newblock Les principes de la mesure des correlations.
\newblock {\em Annales de l'Universit\'e de Lyon. 3. s\'er., Sciences. Section A, Science Math\'ematiques et Astronomie}, pages 30--47.

\bibitem[Farlie, 1960]{farlie1960performance}
Farlie, D.~J. (1960).
\newblock The performance of some correlation coefficients for a general bivariate distribution.
\newblock {\em Biometrika}, 47(3 and 4):307--323.

\bibitem[Fischer and Klein, 2007]{fischer2007constructing}
Fischer, M. and Klein, I. (2007).
\newblock Constructing generalized {{FGM}} copulas by means of certain univariate distributions.
\newblock {\em Metrika}, 65(2):243--260.

\bibitem[Fontana et~al., 2021]{fontana2021model}
Fontana, R., Luciano, E., and Semeraro, P. (2021).
\newblock Model risk in credit risk.
\newblock {\em Mathematical Finance}, 31(1):176--202.

\bibitem[Fontana and Semeraro, 2018]{fontana2018representation}
Fontana, R. and Semeraro, P. (2018).
\newblock Representation of multivariate {{Bernoulli}} distributions with a given set of specified moments.
\newblock {\em Journal of Multivariate Analysis}, 168:290--303.

\bibitem[Fredricks and Nelsen, 2007]{fredricks2007relationship}
Fredricks, G.~A. and Nelsen, R.~B. (2007).
\newblock On the relationship between {{Spearman}}'s rho and {{Kendall}}'s tau for pairs of continuous random variables.
\newblock {\em Journal of Statistical Planning and Inference}, 137(7):2143--2150.

\bibitem[Genest and Favre, 2007]{genest2007everything}
Genest, C. and Favre, A.-C. (2007).
\newblock Everything you always wanted to know about copula modeling but were afraid to ask.
\newblock {\em Journal of Hydrologic Engineering}, 12(4):347--368.

\bibitem[Genest et~al., 2018]{genest2018classa}
Genest, C., Ne{\v s}lehov{\'a}, J.~G., and Rivest, L.-P. (2018).
\newblock The class of multivariate max-id copulas with $\ell_1$-norm symmetric exponent measure.
\newblock {\em Bernoulli}, 24(4B).

\bibitem[Genest et~al., 2024]{genest2024copula}
Genest, C., Okhrin, O., and Bodnar, T. (2024).
\newblock Copula modeling from {{Abe Sklar}} to the present day.
\newblock {\em Journal of Multivariate Analysis}, 201:105278.

\bibitem[Gijbels et~al., 2021]{gijbels2021specification}
Gijbels, I., Kika, V., and Omelka, M. (2021).
\newblock On the specification of multivariate association measures and their behaviour with increasing dimension.
\newblock {\em Journal of Multivariate Analysis}, 182:104704.

\bibitem[Gr{\"o}{\ss}er and Okhrin, 2021]{grosser2021copulae}
Gr{\"o}{\ss}er, J. and Okhrin, O. (2021).
\newblock Copulae: {{An}} overview and recent developments.
\newblock {\em WIREs Computational Statistics}.

\bibitem[Gumbel, 1960]{gumbel1960bivariate}
Gumbel, E.~J. (1960).
\newblock Bivariate exponential distributions.
\newblock {\em Journal of the American Statistical Association}, 55(292):698--707.

\bibitem[Hamadou and Longla, 2025]{hamadou2025extension}
Hamadou, M.-A. and Longla, M. (2025).
\newblock An extension of the d-variate {{FGM}} copula with application.
\newblock {\em arXiv:2509.07281}.

\bibitem[Hashorva and Ratovomirija, 2015]{hashorva2015sarmanov}
Hashorva, E. and Ratovomirija, G. (2015).
\newblock On {{Sarmanov}} mixed {{Erlang}} risks in insurance applications.
\newblock {\em ASTIN Bulletin: The Journal of the IAA}, 45(1):175--205.

\bibitem[Huang and Kotz, 1999]{huang1999modifications}
Huang, J.~S. and Kotz, S. (1999).
\newblock Modifications of the {{Farlie-Gumbel-Morgenstern}} distributions. {{A}} tough hill to climb.
\newblock {\em Metrika}, 49(2):135--145.

\bibitem[Joe, 2015]{joe2015dependence}
Joe, H. (2015).
\newblock {\em Dependence {{Modeling}} with {{Copulas}}}.
\newblock Number 134 in Monographs on Statistics and Applied Probability. CRC Press, Boca Raton, FL.

\bibitem[Lai and Xie, 2000]{lai2000new}
Lai, C.~D. and Xie, M. (2000).
\newblock A new family of positive quadrant dependent bivariate distributions.
\newblock {\em Statistics \& Probability Letters}, 46(4):359--364.

\bibitem[Liebscher, 2008]{liebscher2008construction}
Liebscher, E. (2008).
\newblock Construction of asymmetric multivariate copulas.
\newblock {\em Journal of Multivariate Analysis}, 99(10):2234--2250.

\bibitem[McNeil et~al., 2015]{mcneil2015quantitative}
McNeil, A.~J., Frey, R., and Embrechts, P. (2015).
\newblock {\em Quantitative {{Risk Management}}: {{Concepts}}, {{Techniques}} and {{Tools}}}.
\newblock Princeton Series in Finance. Princeton University Press, Princeton, NJ, revised edition edition.

\bibitem[Morgenstern, 1956]{morgenstern1956einfache}
Morgenstern, D. (1956).
\newblock Einfache beispiele zweidimensionaler verteilungen.
\newblock {\em Mitt, Math, Statist.}, 8:234--235.

\bibitem[M{\"u}ller and Stoyan, 2002]{muller2002comparison}
M{\"u}ller, A. and Stoyan, D. (2002).
\newblock {\em Comparison {{Methods}} for {{Stochastic Models}} and {{Risks}}}.
\newblock Wiley, Chichester, 1st edition edition.

\bibitem[Naseri and Hummel, 2022]{naseri2022bayesian}
Naseri, K. and Hummel, M.~A. (2022).
\newblock A {Bayesian} copula-based nonstationary framework for compound flood risk assessment along {US} coastlines.
\newblock {\em Journal of Hydrology}, 610:128005.

\bibitem[Nelsen, 1996]{nelsen1996nonparametric}
Nelsen, R.~B. (1996).
\newblock Nonparametric measures of multivariate association.
\newblock In {\em Institute of {{Mathematical Statistics Lecture Notes}} - {{Monograph Series}}}, pages 223--232. Institute of Mathematical Statistics, Hayward, CA.

\bibitem[Nelsen, 2006]{nelsen2006introduction}
Nelsen, R.~B. (2006).
\newblock {\em An {{Introduction}} to {{Copulas}}}.
\newblock Springer Series in Statistics. Springer, New York, 2nd ed edition.

\bibitem[Ratovomirija et~al., 2017]{ratovomirija2017multivariate}
Ratovomirija, G., Tamraz, M., and Vernic, R. (2017).
\newblock On some multivariate {{Sarmanov}} mixed {{Erlang}} reinsurance risks: {{Aggregation}} and capital allocation.
\newblock {\em Insurance: Mathematics and Economics}, 74:197--209.

\bibitem[Resnick, 1987]{resnick1987extreme}
Resnick, S.~I. (1987).
\newblock {\em Extreme Values, Regular Variation and Point Processes}.
\newblock Springer {{Series}} in {{Operations Research}} and {{Financial Engineering}}. Springer New York, New York, NY.

\bibitem[Ribeiro et~al., 2020]{ribeiro2020risk}
Ribeiro, A. F.~S., Russo, A., Gouveia, C.~M., P{\'a}scoa, P., and Zscheischler, J. (2020).
\newblock Risk of crop failure due to compound dry and hot extremes estimated with nested copulas.
\newblock {\em Biogeosciences}, 17:4815--4830.

\bibitem[{Rodr{\'i}guez-Lallena} and {\'U}beda-Flores, 2004]{rodriguez-lallena2004new}
{Rodr{\'i}guez-Lallena}, J. and {\'U}beda-Flores (2004).
\newblock A new class of bivariate copulas.
\newblock {\em Statistics \& Probability Letters}, 66(3):315--325.

\bibitem[Sarmanov, 1966]{sarmanov1966generalized}
Sarmanov, O.~V. (1966).
\newblock Generalized normal correlation and two-dimensional Fr{\'e}chet classes.
\newblock In {\em Doklady Akademii Nauk}, volume 168, pages 32--35. Russian Academy of Sciences.

\bibitem[Shaked and Shanthikumar, 2007]{shaked2007stochastic}
Shaked, M. and Shanthikumar, J.~G. (2007).
\newblock {\em Stochastic {{Orders}}}.
\newblock Springer {{Series}} in {{Statistics}}. Springer, New York.

\bibitem[Sharakhmetov and Ibragimov, 2002]{sharakhmetov2002characterization}
Sharakhmetov, S. and Ibragimov, R. (2002).
\newblock A characterization of joint distribution of two-valued random variables and its applications.
\newblock {\em Journal of Multivariate Analysis}, 83(2):389--408.

\bibitem[Shubina and Lee, 2004]{shubina2004maximuma}
Shubina, M. and Lee, M.-L.~T. (2004).
\newblock On maximum attainable correlation and other measures of dependence for the Sarmanov family of bivariate distributions.
\newblock {\em Communications in Statistics - Theory and Methods}, 33(5):1031--1052.

\bibitem[Sklar, 1959]{sklar1959fonctions}
Sklar, A. (1959).
\newblock Fonctions de repartition \`a n dimensions et leurs marges.
\newblock {\em Publications de l'Institut de Statistique de l'Universit\'e de Paris}, 8:229--231.

\bibitem[Teugels, 1990]{teugels1990representations}
Teugels, J.~L. (1990).
\newblock Some representations of the multivariate {{Bernoulli}} and binomial distributions.
\newblock {\em Journal of Multivariate Analysis}, 32(2):256--268.

\bibitem[Ting~Lee, 1996]{tinglee1996properties}
Ting~Lee, M.-L. (1996).
\newblock Properties and applications of the {{Sarmanov}} family of bivariate distributions.
\newblock {\em Communications in Statistics - Theory and Methods}, 25(6):1207--1222.

\bibitem[Vernic et~al., 2022]{vernic2022sarmanov}
Vernic, R., Bolanc{\'e}, C., and Alemany, R. (2022).
\newblock Sarmanov distribution for modeling dependence between the frequency and the average severity of insurance claims.
\newblock {\em Insurance: Mathematics and Economics}, 102:111--125.

\end{thebibliography}
\end{document}